\newtheorem{theorem}{Theorem}
\newtheorem{lemma}{Lemma}
\newtheorem{proposition}{Proposition}
\definecolor{violet}{rgb}{0.7,0,0.6}
\definecolor{OliveGreen}{RGB}{85,107,47}
\title{A Truncated Cramér-von Mises test of normality}
\author{Juan Kalemkerian\\
Universidad de la República, Facultad de Ciencias.}
\begin{document}
\maketitle 

\begin{abstract}
\noindent A new  test of  normality based on a standardised empirical process is introduced in this article.
 The first step is to introduce a Cramér--von Mises type statistic with weights equal to the inverse of the standard normal 
 density function supported
on a symmetric interval $[-a_n,a_n]$ depending on the sample size $n.$
The sequence of end points $a_n$ tends to infinity, and is chosen so that the statistic goes to infinity at 
the speed of $\ln \ln n.$
After substracting the mean, a suitable test statistic is obtained, with the same asymptotic law as the well-known Shapiro--Wilk statistic.
The performance of the new test is described and compared with three other well-known tests of normality, namely, Shapiro--Wilk,
Anderson--Darling and that of del Barrio-Matr\'an, Cuesta Albertos, and Rodr\'{\i}guez Rodr\'{\i}guez, by means of power calculations under many 
 alternative hypotheses.
\end{abstract}
\noindent \textbf{Keywords:} empirical process, normality tests, Cramér-von-Mises statistic. 60G10

\newpage 

\section{Definitions and notation}
In this section we will introduce some definitions and notation to be used throughout this paper.
The notation $:=$ means a definition.
Given $X_{1},X_{2},...,X_{n}$ iid, we define $\overline{X}%
_{n}:=\frac{X_{1}+X_{2}+...+X_{n}}{n}$ the sample mean, $S_{n}^{2}:=%
\frac{1}{n}\sum_{i=1}^{n}\left( X_{i}-\overline{X}_{n}\right) ^{2}$ the
sample variance, and $Y_{i}:=\frac{X_{i}-\overline{X}_{n}}{%
S_{n}}$ for $i=1,2,...,n$, and we define the standardised empirical process $%
\widehat{b}_{n}(x):=\frac{1}{\sqrt{n}}\sum_{i=1}^{n}\left( 1_{\left\{
Y_{i}\leq x\right\} }-\phi (x)\right)$ for all $x \in \mathbb{R}$. Also we write $F_n (x):=\frac{1}{n}\sum_{i=1}^{n} 1_{\left\{
X_{i}\leq x\right\} }$ for the empirical distribution, and 
$b_{n}(x):=\frac{1}{\sqrt{n}}\sum_{i=1}^{n}\left( 1_{\left\{
X_{i}\leq x\right\} }-\phi (x)\right)$.\\
We write $\varphi \left( x\right) :=\frac{1}{\sqrt{2\pi}}e^{\frac{-x ^{2}}{2\sigma ^{2}} }$ and $\phi \left(
x\right) :=\int_{-\infty }^{x}\varphi \left( t\right) dt$ for the density and distribution functions of $N(0,1)$.
The notation $\xrightarrow{P}$, $\xrightarrow{a.s.}$ and $\xrightarrow{w}$ means
convergence in probability, almost surely and in distribution, respectively.
 Throughout this paper, we write $a_n$ for the sequence  $a_{n}:=\phi ^{-1}\left( 1-1/n\right) .$\\

\bigskip
\section{Introduction}
\noindent  In the present paper, we study the Cram\'{e}r--von Mises statistic to test normality
when the weight function is $\psi \left( t\right) =\frac{1}{\varphi
^{2}\left( \phi ^{-1}(t)\right) }$ and we integrate over the interval $\left(
-a_{n},a_{n}\right) $ instead of over all the real line. The main contribution of this paper \ is that we will obtain the
asymptotic distribution of this `truncated' kind of Cram\'{e}r-von-Mises
statistic and we will prove that its limit distribution is equivalent to
that of the Shapiro-Wilk test. Also, for fixed sample
sizes, we will simulate the behaviour of its power, for a wide class of
alternative hypotheses, and we will see that its performance is comparable to that of the
Shapiro-Wilk and Anderson--Darling tests, improving them in many cases.\\
Given  $X_{1},X_{2},...,X_{n}$ iid with distribution $F$ when the null
hypothesis is  $F=F_{0}$ for a certain $F_{0}$ (fixed), Cram\'{e}r (1928)
postulated the statistic  \\ $n\int_{-\infty }^{+\infty }\left(
F_{n}(x)-F_{0}(x)\right) ^{2}dx$, and more
generally von Mises (1931) took \\ $n\int_{-\infty }^{+\infty }\left(
F_{n}(x)-F_{0}(x)\right) ^{2}\rho (x)dx$ when $\rho $ is some non-negative weight
function. Later Smirnov (1936,1937) proposed  working with the statistic \\ $n\int_{-\infty
}^{+\infty }\left( F_{n}(x)-F_{0}(x)\right) ^{2}\psi \left( F_{0}(x)\right)
dF_{0}(x)$, for certain $\psi .$ Nowadays all these statistics (varying $%
\psi $) are called Cram\'{e}r--von Mises statistics. The two most popular
cases are when $\psi \left( t\right) =1$ (because of its simplicity and similarity
with the original Cram\'{e}r statistic), i.e. $%
w_{n}^{2}:=n\int_{-\infty }^{+\infty }\left( F_{n}(x)-F_{0}(x)\right)
^{2}dF_{0}(x)$, which is called the Cram\'{e}r--von Mises statistic; and when $\psi
\left( t\right) =\frac{1}{t(1-t)},$ proposed by Anderson and Darling (1954),
i.e.  $A_{n}^{2}:=n\int_{-\infty }^{+\infty }\frac{%
\left( F_{n}(x)-F_{0}(x)\right) ^{2}}{F_{0}(x)\left( 1-F_{0}(x)\right) }%
dF_{0}(x)$. In this case, the quadratic difference $\left(
F_{n}(x)-F_{0}(x)\right) ^{2}$ is normalized by dividing it by its
expected value, and this is called the Anderson--Darling statistic.

$A_{n}^{2}$ has the advantage of being generally more powerful than $w_{n}^{2}$,
for a wide class of alternative hypotheses, see, for instance, Stephens (1986).

When the null hypothesis is that the distribution lies in a certain
parametric family, i.e., $F(x)=F(x,\theta )$ for certain $\theta $, then it
is natural to extend the Cram\'{e}r--von Mises statistics used for a simple
hypothesis in the form  \[n\int_{-\infty }^{+\infty }\left( F_{n}(x)-F(x,%
\widehat{\theta })\right) ^{2}\psi \left( F(x,\widehat{\theta })\right) dF(x,%
\widehat{\theta })\] where $\widehat{\theta }$ is a suitable estimator of $%
\theta $. Then we obtain  $\widehat{A}_{n}^{2}:=n\int_{-\infty }^{+\infty }%
\frac{\left( F_{n}(x)-F(x,\widehat{\theta })\right) ^{2}}{F(x,\widehat{%
\theta })\left( 1-F(x,\widehat{\theta })\right) }dF(x,\widehat{\theta })$
and $\widehat{w}_{n}^{2}:=n\int_{-\infty }^{+\infty }\left( F_{n}(x)-F(x,%
\widehat{\theta })\right) ^{2}dF(x,\widehat{\theta }).$ Again, generally, $%
\widehat{A}_{n}^{2}$ performs better than $\widehat{w}_{n}^{2}.$

The general study of empirical processes when parameters are estimated was
conducted by Durbin (1973). He used the theory of weak convergence in $D%
\left[ 0,1\right] .$ Durbin's results give the asymptotic distribution of a
wide class of Cram\'{e}r--von Mises statistics and many others.

Normality tests are a particular case of goodness of fit tests. Many tests of normality
have been designed. Stephens (1986), for instance, provides a good summary.
The theory of tests for normality was initiated by Pearson (1895, 1930), Fisher (1930)
and Williams (1935), with the $\sqrt{\beta _{1}}$ and $\beta _{2}$ tests\
(third and fourth standardised moments). Of course, the performance  and the asymptotic distribution 
of the classical Kolmogorov--Smirnov, $\chi ^{2}$, Cram\'{e}r--von Mises, Anderson--Darling and
many other tests of normality, have been extensively studied. Further,  with the Shapiro--Wilk (1965) test for
normality, there began the development of correlation and regression tests;
see, for instance, Lockhart and Stephens (1998) for a good review of this
subject. In the normal case, the Cram\'{e}r--von Mises statistics are $%
n\int_{-\infty }^{+\infty }\left( F_{n}(t)-\phi \left( \frac{t-\overline{%
X_{n}}}{S_{n}}\right) \right) ^{2}\psi \left( \phi \left( \frac{t-\overline{%
X_{n}}}{S_{n}}\right) \right) \varphi \left( \frac{t-\overline{X_{n}}}{S_{n}}%
\right) dt$.

In the normal case, there have also been extensively studied, both theoretically and
empirically, the statistics $\widehat{w}_{n}^{2}$ and $\widehat{A}_{n}^{2}.$ 

In Csörgó and Faraway (1996) the authors find an exact and asymptotic
distribution for $w_{n}^{2}.$ In Pettitt and Stephens (1976) the authors modified the  Cram\'{e}r--von
Mises statistic to allow them to test the normality of censored samples. The asymptotic theory for this is
found in Pettitt (1976). Stephens (1974), found that in the case of
the normality tests  $\widehat{w}_{n}^{2}$ and $\widehat{A}_{n}^{2}$,
especially the latter, are comparable in terms of power with the Shapiro--Wilk
test.

 The paper is organized as follows. In
Section 3, we will examine the reasons for taking this particular weight
function $\psi $.  In Section  4 we will
find the asymptotic distribution of the proposed test and give a proof that
it is equivalent to the asymptotic distribution of the Shapiro-Wilk test. Our concluding remarks
are in section 5. In Appendix 1 we will calculate the power of this test
under certain alternative hypotheses, and we will compare it with other
tests, like the Shapiro--Wilk, Anderson-Darling, and others. In Appendix 2 we give indications for calculating the 
statistic and in Appendix 3 we give tables with their critical values for different possible levels of significance and for different values of sample size $n$.

\section{Test Aims and Approach}
Given $X_1,X_2,...,X_n$ iid random variables with distribution $F$, we want to test
$H_0: F(x)=\phi \left ( \frac{x-\mu}{\sigma} \right )$ for certain $\mu $ and $\sigma$.
In del Barrio et al. (1999), a test of goodness of fit for normal
distributions based on the Wasserstein distance is presented, and they
prove that that test statistic has an asymptotic distribution equivalent to
that of the Shapiro--Wilk test (1965). This limiting distribution
is $\int_{-\infty }^{+\infty }\frac{b^{2}(x)-\mathbb{E}b^{2}(x)}{\varphi (x)}%
dx-\left( \int_{-\infty }^{+\infty }xb(x)dx\right) ^{2}-\left( \int_{-\infty
}^{+\infty }b(x)dx\right) ^{2}$ where $\{b(x)\}_{x\in \mathbb{R}}$ is a $%
\phi -$Brownian bridge. The first summand of this decomposition allows us to propose 
$%
T_n^*:=\int_{-a_{n}}^{a_{n}}\frac{\widehat{b}_{n}^{2}(x)}{\varphi (x)}dx$ to test normality.
 Although $%
\int_{-\infty }^{+\infty }\frac{\widehat{b}_{n}^{2}(x)}{\varphi (x)}%
dx<+\infty $ for each $n$, Kalemkerian (2016) proves that 
$
\int_{-a_{n}}^{a_{n}}\frac{\mathbb{E}\left( \widehat{b}_{n}^{2}(x)\right) } {%
\varphi (x)}dx=\ln \left( \ln n\right) +c_{n}, \text{ where }\left\{
c_{n}\right\}$ is a convergent sequence. This result is similar to the equality 
$\int_{-a_{n}}^{a_{n}}\frac{\mathbb{E}b^{2}(x) }{\varphi \left( x\right) }%
dx=\ln \left( \ln n\right) +\ln 2+\gamma +o\left( 1\right) $ where $\gamma $
is Euler's constant, proved in de Wet and Venter (1972).
Then $\int_{-a_{n}}^{a_{n}}\frac{\widehat{b}_{n}^{2}(x)}{\varphi
(x)}dx$ goes  to infinity as $n\rightarrow \infty.$ Hence we cannot expect $T_n^*$ to converge.
So if we follow the ideas of del
Barrio et al. (1999), we can propose studying \[\int_{-a_{n}}^{a_{n}}\frac{\widehat{b}%
_{n}^{2}(x)-\mathbb{E}\widehat{b}_{n}^{2}(x)}{\varphi (x)}dx \text{ or } \int_{-a_{n}}^{a_{n}}\frac{\widehat{b}%
_{n}^{2}(x)-\mathbb{E}{b}^{2}(x)}{\varphi (x)}dx\] as a test statistic with the intention of obtaining
an asymptotic distribution, which means that the rejection zone is \[\left \{ \int_{-a_{n}}^{a_{n}}\frac{\widehat{b}%
_{n}^{2}(x)-\mathbb{E}\widehat{b}_{n}^{2}(x)}{\varphi (x)}dx \geq c \right \} \text{ or } \left \{ \int_{-a_{n}}^{a_{n}}\frac{\widehat{b}%
_{n}^{2}(x)-\mathbb{E}{b}^{2}(x)}{\varphi (x)}dx \geq c \right \}.\]
  Then for fixed $n$, we propose $%
\int_{-a_{n} }^{a_{n} }\frac{\widehat{b}_{n}^{2}(x)}{\varphi (x)}%
dx$ instead $\int_{-a_{n}}^{a_{n}}\frac{\widehat{b}%
_{n}^{2}(x)-\mathbb{E}\widehat{b}_{n}^{2}(x)}{\varphi (x)}dx$ or $\int_{-a_{n}}^{a_{n}}\frac{\widehat{b}%
_{n}^{2}(x)-\mathbb{E}{b}^{2}(x)}{\varphi (x)}dx$ to test normality.
Finally, if $t=S_{n}x+\overline{X_{n}}$, then $\widehat{b}_{n}(x)=%
\sqrt{n}\left( F_{n}\left( t\right) -\phi \left( \frac{t-\overline{X_{n}}}{%
S_{n}}\right) \right) $, so $T_{n}^{\ast }=\int_{-a_{n}}^{a_{n}}\frac{\widehat{b}_{n}^{2}(x)}{\varphi (x)}dx=\frac{n}{%
S_{n}}\int_{\overline{X_{n}}-a_{n}S_{n}}^{\overline{X_{n}}+a_{n}S_{n}}\frac{\left( F_{n}\left( t\right) -\phi \left( 
\frac{t-\overline{X_{n}}}{S_{n}}\right) \right) ^{2}}{\varphi \left( \frac{t-%
\overline{X_{n}}}{S_{n}}\right) }dt.$ Then, estimating $\mu $
by $\overline{X_{n}}$ and $\sigma $ by $S_{n}$, it follows that $T_{n}^{\ast }$ can be seen as a kind of Cram\'{e}r--von Mises 
statistic to test normality when the weight function is $\psi (t)=\frac{1}{\varphi ^{2}\left( \phi ^{-1}(t)\right) }$ and integrated over the interval
$\left( -a_{n},a_{n}\right) $ instead of over all the real line.

\section{Asymptotic distribution of the TCVM test statistic}

\noindent In this section, given $X_1,X_2,...,X_n$ iid, we propose $\left \{ T_n \geq c \right \}$ to test normality, 
where $T_n:= \int_{-a_{n}}^{a_{n}}\frac{\widehat{b}%
_{n}^{2}(x)-\mathbb{E}{b}^{2}(x)}{\varphi (x)}dx$, and we will obtain the asymptotic distribution of $T_n$ under $H_0$.
Since $\widehat{b}_{n}(x)$ is invariant under changes of location and scale, to test normality, we may assume
(if the null hypothesis is true) that
 $X_{1},X_{2},...,X_{n} $ are independent $N\left( 0,1\right)
$. We can express $\widehat{b}_{n}(x)$ and $b_n (x)$ (defined in Section 1) as
\begin{equation*}
\widehat{b}_{n}(x)=b_{n}(x_{n})+\sqrt{n}\left( \phi (x_{n})-\phi (x)\right)
\end{equation*}%
where $x_{n}=S_{n}x+\overline{X}_{n}.$ Then, given $x,$ $\widehat{b}_{n}(x)$
converges to $b(x)$ with $\left\{ b(x)\right\} _{x\in \mathbb{R}}$ being the 
Brownian bridge associated with $\phi $,  the asymptotic limit of the processes $%
\left\{ b_{n}(x)\right\} _{x\in \mathbb{R}}$. Moreover, Kalemkerian (2016) proved that%
\begin{equation*}
\int_{-a_{n}}^{a_{n}}\frac{\mathbb{E}\left( \widehat{b}_{n}^{2}(x)\right) }{%
\varphi (x)}dx=\ln \left( \ln n\right) +c_{n}\text{ where }\left\{
c_{n}\right\} \text{ converges.}
\end{equation*}%
Hence, our test statistic goes to infinity.
In Lemma 4 we give meaning to the expression $%
\int_{-\infty }^{+\infty }\frac{b^{2}(x)-\phi (x)\left( 1-\phi (x)\right) }{%
\varphi (x)}dx$, therefore recalling that $\mathbb{E}\left( b^{2}(x)\right) =\phi
(x)\left( 1-\phi (x)\right) $, it is reasonable to assume that if we subtract
from our statistic its expected value or an equivalent quantity, we  should find a non-trivial
asymptotic distribution of the sequence \begin{equation*}
T_{n}=\int_{-a_{n}}^{a_{n}}\frac{\widehat{b}_{n}^{2}(x)-\mathbb{E}\left(
b^{2}(x)\right) }{\varphi (x)}dx.
\end{equation*}%
Indeed, in the following theorem, we summarize the main result of this section.
\begin{theorem}

$T_n$ converges in law to 
\begin{equation*}
\int_{-\infty }^{+\infty }\frac{b^{2}(x)-\mathbb{E}\left( b^{2}(x)\right) }{%
\varphi (x)}dx-\left ( \int_{-\infty}^{+\infty}xdb(x) \right )^{2}-\left ( \int_{-\infty}^{+\infty}x^{2}db(x) \right )^{2}+c,
\end{equation*}%
where $c$ is a constant and $\{b(x)\}_{x \in \mathbb{R}} $ is a $\phi$- Brownian bridge.

\end{theorem}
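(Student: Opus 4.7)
The plan is to start from the identity
\begin{equation*}
\widehat{b}_n(x)=b_n(x_n)+\sqrt{n}\bigl(\phi(x_n)-\phi(x)\bigr), \qquad x_n=S_n x+\overline{X}_n,
\end{equation*}
stated in the excerpt, and to linearise the second summand by a Taylor expansion. Setting $A_n:=\sqrt{n}\,\overline{X}_n$ and $B_n:=\sqrt{n}(S_n-1)$, and using $\varphi'(x)=-x\varphi(x)$, a first-order expansion yields
\begin{equation*}
\sqrt{n}\bigl(\phi(x_n)-\phi(x)\bigr)=\varphi(x)\bigl(B_n x+A_n\bigr)+R_n(x),
\end{equation*}
where $R_n(x)$ is a remainder of order $\varphi(\xi_n)(x_n-x)^2\sqrt{n}$ with $\xi_n$ between $x$ and $x_n$. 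Squaring $\widehat{b}_n(x)$, dividing by $\varphi(x)$ and integrating on $[-a_n,a_n]$, the statistic decomposes as
\begin{equation*}
T_n=I_n+2A_n J_n+2B_n K_n+A_n^2 L_n+2A_n B_n M_n+B_n^2 N_n+\mathrm{rem}_n,
\end{equation*}
where $I_n=\int_{-a_n}^{a_n}(b_n^2(x_n)-\mathbb{E}b^2(x))/\varphi(x)\,dx$, $J_n=\int_{-a_n}^{a_n}b_n(x_n)\,dx$, $K_n=\int_{-a_n}^{a_n}xb_n(x_n)\,dx$, and $L_n,M_n,N_n$ are the integrals of $1,x,x^2$ against $\varphi$ over $[-a_n,a_n]$.

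The ``main'' term $I_n$ I would treat via the change of variables $y=x_n$, reducing it, up to a deterministic correction, to $\int(b_n^2(y)-\mathbb{E}b^2(y))/\varphi(y)\,dy$. Combined with the gap between $\mathbb{E}\widehat{b}_n^2$ and $\mathbb{E}b^2$, this correction gives rise to an additive constant through the identity of Kalemkerian (2016), $\int_{-a_n}^{a_n}\mathbb{E}\widehat{b}_n^2/\varphi\,dx=\ln\ln n+c_n$, and that of de Wet--Venter (1972), $\int_{-a_n}^{a_n}\mathbb{E}b^2/\varphi\,dx=\ln\ln n+\ln 2+\gamma+o(1)$; this is the source of the constant $c$ appearing in the theorem. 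The centred part converges in law to $\int_{-\infty}^{+\infty}(b^2-\mathbb{E}b^2)/\varphi\,dx$ by coupling $b_n$ to a $\phi$--Brownian bridge through a KMT-type strong approximation and invoking Lemma~4 to give meaning to this limiting integral. For the cross terms I would integrate by parts and write
\begin{equation*}
J_n=-\int_{-a_n}^{a_n}x\,db_n(x)+o_p(1),\qquad K_n=-\tfrac{1}{2}\int_{-a_n}^{a_n}x^2\,db_n(x)+o_p(1),
\end{equation*}
and use the multivariate CLT for $(\sqrt{n}\,\overline{X}_n,\sqrt{n}(S_n^2-1))$ to identify the joint weak limit of $(A_n,B_n,J_n,K_n)$ in terms of the Wiener integrals $\int x\,db$ and $\int x^2\,db$. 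Since $L_n\to 1$, $M_n\to 0$, $N_n\to 1$, the cross plus quadratic contributions collapse in the limit to $-\bigl(\int x\,db\bigr)^2-\bigl(\int x^2\,db\bigr)^2$, up to a deterministic quantity that gets absorbed in $c$.

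The hard part will be the uniform control of $R_n(x)$ and $\mathrm{rem}_n$ after division by $\varphi(x)$ and integration over $[-a_n,a_n]$: because $1/\varphi(x)=\sqrt{2\pi}\,e^{x^2/2}$ while $a_n\sim\sqrt{2\ln n}$, a crude pointwise bound on $R_n$ is not sufficient. The remedy is to keep the factor $\varphi(\xi_n)$ inherent in $R_n$, which almost cancels the $e^{x^2/2}$ blow-up, and then to exploit the tightness of $A_n,B_n$ together with sharp Gaussian tail estimates in order to prove that $\int_{-a_n}^{a_n}\mathrm{rem}_n/\varphi\,dx=o_p(1)$. A parallel delicate point is the boundary contribution of the integration by parts at $x=\pm a_n$; here one uses $\mathrm{Var}\bigl(b_n(\pm a_n)\bigr)=\phi(\pm a_n)(1-\phi(\pm a_n))=O(1/n)$ to conclude that $a_n^k b_n(\pm a_n)=o_p(1)$ for $k=1,2$, so the boundary terms drop out.
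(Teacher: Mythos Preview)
Your decomposition and overall architecture parallel the paper's proof closely: the paper also squares the identity $\widehat b_n(x)=b_n(x_n)+\sqrt n\,(\phi(x_n)-\phi(x))$, writes $T_n=H_n+I_n+J_n$ as a centred piece, a cross/quadratic piece handled by Taylor expansion of $\phi(x_n)-\phi(x)$, and a deterministic piece converging to $c$. Your treatment of the cross terms via integration by parts, the identification of $(A_n,B_n)$ with $(\int x\,db,\tfrac12\int x^2\,db)$ in the limit, the Taylor-remainder bound exploiting the surviving factor $\varphi(\xi_n)$, and the boundary analysis at $\pm a_n$ all match the paper's Lemmas~5 and 8--11 in spirit. (The paper works throughout on a common probability space via the Skorokhod construction, so that every convergence is in probability rather than only in law; this is a cosmetic difference for these terms.) One small correction: the change of variables $y=x_n$ in your ``main'' term does not produce a purely deterministic correction, since the Jacobian and the ratio $\varphi(x_n)/\varphi(x)$ depend on $S_n,\overline X_n$; the paper handles this random perturbation separately in Proposition~1, using the same Taylor-type estimates you propose for $R_n$.

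There is, however, a genuine gap in your treatment of the centred integral $\int_{-a_n}^{a_n}(b_n^2(y)-\mathbb E b^2(y))/\varphi(y)\,dy$. A KMT-type coupling gives at best $\sup_x|b_n(x)-b(x)|=O_p(n^{-1/2}\log n)$, or $o_p(1)$ in a weighted norm such as $\sup|b_n-b|/\sqrt\varphi$ (which the paper does invoke, via Shorack--Wellner, but only in Lemma~5 for a different purpose). Neither is strong enough to pass to the limit inside the integral: with weight $1/\varphi(x)=\sqrt{2\pi}\,e^{x^2/2}$ and $a_n\sim\sqrt{2\ln n}$, the obvious bound $\int_{-a_n}^{a_n}|b_n^2-b^2|/\varphi\le \sup\bigl(|b_n-b|/q\bigr)\cdot\int_{-a_n}^{a_n}q\,(|b_n|+|b|)/\varphi$ has a second factor that diverges for every admissible $q$. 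Lemma~4, which you cite, controls only the tails of the limiting bridge $b$, not of the approximants $b_n$. The paper closes this gap by an $L^2$ argument that bypasses strong approximation altogether: it establishes the exact identity $\mathbb E\bigl(b_n^2(x)b_n^2(y)\bigr)=\mathbb E\bigl(b^2(x)b^2(y)\bigr)+p(x,y)/n$ (Lemma~6) and then shows directly that the second moment of the tail piece $\int_{\delta}^{a_n}(b_n^2-\mathbb E b^2)/\varphi$ tends to $0$ as $\delta\to\infty$, uniformly in $n$ (Lemma~7). This fourth-moment computation for $b_n$ is the key technical ingredient your outline is missing.
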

To obtain this result, we use the Skorokhod construction (Skorokhod, 1956). Thus, there exists a triangular array of row 
independent random variables $X_{n1},...,X_{nn}$ and $\{ b(x)\}_{x \in \mathbb{R}}$ a $\phi$- Brownian bridge, all defined on the same sample space, such that 
$||b_n-b||\xrightarrow{a.s.}0$ where $||f||=\sup_{x \in \mathbb{R}}|f(x)|$ and $\{b_n(x)\}_{x \in \mathbb{R}}$ are the empirical 
processes asociated to $X_{n1},...,X_{nn}$.
To simplify the notation, we will continue to write $X_1,...,X_n$ instead $X_{n1},...,X_{nn}$ throughout the rest of this paper.

Theorem 1 follows from the following succession of lemmas and propositions.

We begin by noting that
\begin{equation}\label{bn2}
\widehat{b}_{n}^{2}(x)=b_{n}^{2}(x_{n})+n\left( \phi (x_{n})-\phi (x)\right)
^{2}+2\sqrt{n}b_{n}(x_{n})\left( \phi (x_{n})-\phi (x)\right),
\end{equation}
 adding and substracting the expression  $\mathbb{E}\left(b_{n}^{2}(x_{n}) \right )$ 
 to the numerator of the  integrand in $T_n$, we obtain the
following decomposition

\begin{equation}\label{Rn}
T_{n}=H_{n}+I_{n}+J_{n}
\end{equation}%
where 
\begin{equation*}
H_{n}:=\int_{-a_{n}}^{a_{n}}\frac{b_{n}^{2}(x_{n})-\mathbb{E}\left(
b_{n}^{2}(x_{n})\right) }{\varphi (x)}dx,
\end{equation*}%
\begin{equation*}
I_{n}:=\int_{-a_{n}}^{a_{n}}\frac{n\left( \phi (x_{n})-\phi (x)\right) ^{2}+2%
\sqrt{n}b_{n}(x_{n})\left( \phi (x_{n})-\phi (x)\right) }{\varphi (x)}dx,
\end{equation*}%
\begin{equation*}
J_{n}:=\int_{-a_{n}}^{a_{n}}\frac{\mathbb{E}\left( b_{n}^{2}(x_{n})\right) -%
\mathbb{E}\left( b^{2}(x)\right) }{\varphi (x)}dx.
\end{equation*}%

\begin{lemma}
 \begin{equation} \label{equivalencia de phi}
  \frac{x\varphi (x)}{1+x^{2}}\leq \phi \left( -x\right) \leq \frac{\varphi (x)%
}{x}\text{ for all }x>0.
\end{equation}
\end{lemma}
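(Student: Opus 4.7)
The plan is to prove the two inequalities separately by elementary means; both are classical Mills-ratio bounds for the Gaussian tail.

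For the upper bound $\phi(-x) \leq \varphi(x)/x$, I would start from the tail representation $\phi(-x) = \int_x^{\infty} \varphi(t)\,dt$ and use the identity $\varphi'(t) = -t\varphi(t)$ to rewrite the integrand as $-\varphi'(t)/t$. A single integration by parts (with $u = 1/t$, $dv = \varphi'(t)\,dt$) then gives
\[
\phi(-x) \;=\; \frac{\varphi(x)}{x} \;-\; \int_x^{\infty}\frac{\varphi(t)}{t^{2}}\,dt,
\]
and dropping the manifestly nonnegative remainder yields the desired bound.

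For the lower bound $\phi(-x) \geq x\varphi(x)/(1+x^{2})$, I would use a monotonicity argument on
\[
h(x) \;:=\; \phi(-x) \;-\; \frac{x\varphi(x)}{1+x^{2}}.
\]
Differentiating, using $\varphi'(x) = -x\varphi(x)$ together with the quotient rule on the second term, produces a single rational multiple of $\varphi(x)$ whose numerator collapses after the algebraic cancellation $(1+x^{2})^{2} + (1 - 2x^{2} - x^{4}) = 2$, leaving
\[
h'(x) \;=\; \frac{-2\varphi(x)}{(1+x^{2})^{2}} \;<\; 0.
\]
Thus $h$ is strictly decreasing on $(0,\infty)$. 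Since both $\phi(-x)$ and $x\varphi(x)/(1+x^{2})$ tend to $0$ as $x \to \infty$, we have $\lim_{x\to\infty} h(x) = 0$, and strict monotonicity then forces $h(x) > 0$ for every $x > 0$, which is the claimed inequality.

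There is no real obstacle in this proof. The upper bound is the standard one-step integration by parts for the Mills ratio, and the lower bound reduces to verifying a short polynomial identity in the derivative of $h$, after which a limit argument finishes the job. The only place where one must be a little careful is the limit at infinity for $h$, but the decay of $\varphi$ makes both terms trivially vanish.
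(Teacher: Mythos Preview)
Your proof is correct. The upper bound is essentially the paper's argument: both exploit $\varphi'(t)=-t\varphi(t)$, though the paper phrases it as $\varphi(x)=\int_x^\infty t\varphi(t)\,dt\ge x\int_x^\infty\varphi(t)\,dt$ rather than as an integration by parts with a dropped remainder.

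For the lower bound the routes diverge. The paper reuses the very identity you derived for the upper bound,
\[
1-\phi(x)=\frac{\varphi(x)}{x}-\int_x^\infty\frac{\varphi(t)}{t^2}\,dt,
\]
and bounds the remaining integral by $\frac{1}{x^2}\int_x^\infty\varphi(t)\,dt=\frac{1-\phi(x)}{x^2}$; this gives a self-referential inequality $1-\phi(x)\ge\varphi(x)/x-(1-\phi(x))/x^2$ that solves directly to the claim. Your approach instead differentiates $h(x)=\phi(-x)-x\varphi(x)/(1+x^2)$ and uses the algebraic collapse $(1+x^2)^2+(1-2x^2-x^4)=2$ to get $h'(x)=-2\varphi(x)/(1+x^2)^2<0$, then concludes from $h(\infty)=0$. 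The paper's argument is a hair slicker since it recycles the same integral identity for both bounds and avoids computing a derivative; your argument is more self-contained for the lower bound and in fact yields the strict inequality $h(x)>0$ directly. Both are standard and equally valid.
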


\begin{proof}
$\varphi \left( x\right) =\int_{x}^{+\infty }t\varphi \left( t\right) dt\geq
x\int_{x}^{+\infty }\varphi \left( t\right) dt=x\left( 1-\phi \left(
x\right) \right) .$ Then, the second inequality in (\ref{equivalencia de phi}) is true. For the
other inequality, observe that $1-\phi \left( x\right) =\int_{x}^{+\infty
}\varphi \left( t\right) dt=\int_{x}^{+\infty }t\varphi \left( t\right) 
\frac{1}{t}dt$. If we integrate by parts, we obtain that $1-\phi \left(
x\right) =\frac{\varphi \left( x\right) }{x}-\int_{x}^{+\infty }\frac{%
\varphi \left( t\right) }{t^{2}}dt\geq \frac{\varphi \left( x\right) }{x}-%
\frac{1-\phi \left( x\right) }{x^{2}}$, then $1-\phi \left( x\right) \geq 
\frac{x\varphi (x)}{1+x^{2}}.$
\end{proof}

\begin{lemma}
$\frac{a_{n}^{p}}{%
n^{q}}\rightarrow 0$ for all $p,q>0.$ 
\end{lemma}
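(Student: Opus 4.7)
The plan is to show that $a_n$ grows at most like $\sqrt{2\ln n}$, after which the claim is immediate since any power of $\ln n$ is negligible compared to any positive power of $n$. The tool for this is Lemma 1, which was precisely stated in the previous display.

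By definition of $a_n$, we have $1-\phi(a_n)=1/n$, and since $a_n\to+\infty$, we may apply the right inequality of Lemma 1 for $n$ large enough to obtain
\begin{equation*}
\frac{1}{n}=1-\phi(a_n)\leq \frac{\varphi(a_n)}{a_n}=\frac{1}{a_n\sqrt{2\pi}}\,e^{-a_n^{2}/2}.
\end{equation*}
Taking logarithms and rearranging yields
\begin{equation*}
\frac{a_n^{2}}{2}\leq \ln n-\ln a_n-\tfrac{1}{2}\ln(2\pi).
\end{equation*}
Since $a_n\to+\infty$, the right-hand side is bounded above by $\ln n$ for all $n$ sufficiently large, so $a_n\leq \sqrt{2\ln n}$ eventually.

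With this bound in hand, for any $p,q>0$ we have, for large enough $n$,
\begin{equation*}
\frac{a_n^{p}}{n^{q}}\leq \frac{(2\ln n)^{p/2}}{n^{q}},
\end{equation*}
and the right-hand side tends to $0$ since $(\ln n)^{p/2}=o(n^{q})$ for any $q>0$. This proves the lemma.

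There is no real obstacle here; the only subtlety is to make sure one has verified $a_n\to\infty$ (immediate from $\phi^{-1}(1-1/n)\to \phi^{-1}(1)=+\infty$) before invoking Lemma 1, which requires $x>0$. The lower bound in Lemma 1 is not needed for this direction and would only be used if one also wanted the matching asymptotic $a_n\sim\sqrt{2\ln n}$.
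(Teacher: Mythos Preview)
Your proof is correct and uses essentially the same idea as the paper: both arguments hinge on Lemma~1 (the Mill's ratio bound) applied at $x=a_n$. The paper proceeds slightly more directly by writing $\tfrac{a_n^{p}}{n^{q}}=\phi^{q}(-a_n)\,a_n^{p}$ and invoking the asymptotic $\phi(-x)\sim\varphi(x)/x$, whereas you extract the explicit bound $a_n\le\sqrt{2\ln n}$ first; either route is fine, and yours has the mild advantage of making the growth rate of $a_n$ explicit.
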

\begin{proof}
From the previous lemma, we obtain that if $x\rightarrow +\infty $, then $\phi
\left( -x\right) \sim \frac{\varphi \left( x\right) }{x},$ where  $f(x)\sim
g(x)$ means that $\frac{f(x)}{g(x)}\rightarrow 1$ when $x\rightarrow +\infty
.$ Then \[
\frac{a_{n}^{p}}{n^{q}}=\phi ^{q}\left( -a_{n}\right) a_{n}^{p}\sim \varphi
^{q}\left( a_{n}\right) a_{n}^{p-1}\rightarrow 0.\]\end{proof}

\begin{lemma} For any $\{b(x)\}_{x \in \mathbb{R}}$ $\phi-$Brownian bridge, it holds that 
 \begin{equation}
  \int_{-\infty}^{+\infty}\int_{-\infty}^{+\infty}\frac{\mathbb{COV}\left(
b^{2}(x);b^{2}(y)\right) }{\varphi (x)\varphi (y)}dxdy<+\infty.
 \end{equation}

\end{lemma}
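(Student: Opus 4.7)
The plan is to exploit the Gaussianity of $b$, which lets me replace the variance of $b^{2}$ with the square of the covariance of $b$, and then to control the resulting one-dimensional integral at each of the four tail regimes using Lemma 1. Since $b$ is a centred Gaussian process with covariance
\begin{equation*}
K(x,y):=\mathbb{E}(b(x)b(y))=\phi(x\wedge y)\bigl(1-\phi(x\vee y)\bigr),
\end{equation*}
Isserlis' formula gives $\mathbb{COV}(b^{2}(x),b^{2}(y))=2K(x,y)^{2}$. The integrand is symmetric in $(x,y)$, so restricting to $x\leq y$ and using this symmetry, the double integral in the statement equals
\begin{equation*}
4\int_{-\infty}^{+\infty}\frac{(1-\phi(y))^{2}}{\varphi(y)}\,g(y)\,dy,\qquad
g(y):=\int_{-\infty}^{y}\frac{\phi(x)^{2}}{\varphi(x)}\,dx,
\end{equation*}
so it suffices to check that this one-dimensional integral is finite.

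Next I would estimate $g$ at both tails using Lemma 1. For $y\to-\infty$, the lemma gives $\phi(x)\leq \varphi(x)/|x|$ for $x<0$, hence $\phi(x)^{2}/\varphi(x)\leq \varphi(x)/x^{2}$ and
\begin{equation*}
g(y)\leq \int_{-\infty}^{y}\frac{\varphi(x)}{x^{2}}\,dx\leq \frac{\phi(y)}{y^{2}}=O\!\left(\frac{\varphi(y)}{|y|^{3}}\right).
\end{equation*}
For $y\to+\infty$, splitting $g(y)$ at $0$, the tail piece is at most $\sqrt{2\pi}\int_{0}^{y}e^{x^{2}/2}dx$; the classical asymptotic $\int_{0}^{y}e^{x^{2}/2}dx\sim e^{y^{2}/2}/y$, obtained by an integration by parts analogous to the one in Lemma 1, yields $g(y)=O\bigl(1/(y\varphi(y))\bigr)$.

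Finally I would combine these with the tail behaviour of the outer factor $(1-\phi(y))^{2}/\varphi(y)$: by Lemma 1 this is asymptotic to $\varphi(y)/y^{2}$ as $y\to+\infty$ and is bounded by $1/\varphi(y)$ as $y\to-\infty$. Multiplying the two estimates, the integrand of the one-dimensional integral is $O(1/|y|^{3})$ in both tails and is continuous on any compact set, so the integral is finite.

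The step I expect to be the main obstacle is obtaining a sufficiently tight bound for $g(y)$ as $y\to+\infty$: there $\phi(x)^{2}$ is essentially $1$, so the integrand blows up like $e^{x^{2}/2}$, and only the sharp asymptotic (rather than a naive supremum estimate) leaves the right amount of room to be cancelled by the $\varphi(y)/y^{2}$ factor outside. All other tail behaviours follow directly from Lemma 1.
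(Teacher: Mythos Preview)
Your proof is correct and follows essentially the same route as the paper: use the Gaussian identity $\mathbb{COV}(b^{2}(x),b^{2}(y))=2K(x,y)^{2}$, reduce to the one-dimensional integral $4\int_{-\infty}^{+\infty}\frac{(1-\phi(y))^{2}}{\varphi(y)}\int_{-\infty}^{y}\frac{\phi^{2}(x)}{\varphi(x)}\,dx\,dy$, and control both tails via the Mills-ratio bounds of Lemma~1. The only minor difference is in the positive tail, where the paper bounds $\int_{1}^{y}\varphi(x)^{-1}dx\le\int_{1}^{y}x\varphi(x)^{-1}dx$ to obtain an explicit antiderivative and a $1/y^{2}$ decay, while you invoke the asymptotic $\int_{0}^{y}e^{x^{2}/2}dx\sim e^{y^{2}/2}/y$ to get $1/y^{3}$; both are equally valid.
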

\begin{proof} We use that if $(X,Y)$ are centered Gaussian random variables in $\mathbb{R}^{2}$, then 
$\mathbb{COV}(X^{2},Y^{2})=2 \left ( \mathbb{COV}(X,Y) \right )^{2}$. Then 

 \begin{equation*}
\int_{-\infty}^{+\infty}\int_{-\infty}^{+\infty}\frac{\mathbb{COV}\left(
b^{2}(x);b^{2}(y)\right) }{\varphi (x)\varphi (y)}dxdy=2\int_{-\infty}^{+\infty}%
\int_{-\infty}^{+\infty}\frac{\phi ^{2}(x\wedge y)\left( 1-\phi \left( x\vee
y\right) \right) ^{2}}{\varphi (x)\varphi (y)}dxdy=
\end{equation*}%
\begin{equation*}
4\int_{-\infty}^{+\infty}dx\int_{-\infty}^{x}\frac{\phi ^{2}(y)\left( 1-\phi
\left( x\right) \right) ^{2}}{\varphi (x)\varphi (y)}dy=4%
\int_{-\infty}^{+\infty}\frac{\left( 1-\phi \left( x\right) \right) ^{2}}{%
\varphi (x)}dx\int_{-\infty}^{x}\frac{\phi ^{2}(y)}{\varphi (y)}dy.
\end{equation*}

If $x\rightarrow -\infty $, then 
\begin{equation*}
\frac{\left( 1-\phi \left( x\right) \right) ^{2}}{\varphi (x)}%
\int_{-\infty}^{x}\frac{\phi ^{2}(y)}{\varphi (y)}dy\leq \frac{\left( 1-\phi
\left( x\right) \right) ^{2}}{\varphi (x)}\int_{-\infty}^{x}\frac{\varphi (y)%
}{y^{2}}dy\leq 
\end{equation*}%
\begin{equation*}
\frac{1}{\varphi (x)}\frac{\phi (x)}{x^{2}}\leq \frac{1}{%
\left\vert x^{3}\right\vert }.
\end{equation*}%
If $x\rightarrow +\infty $, then 
\begin{equation*}
\frac{\left( 1-\phi \left( x\right) \right) ^{2}}{\varphi (x)}%
\int_{-\infty}^{x}\frac{\phi ^{2}(y)}{\varphi (y)}dy\leq \frac{\left( 1-\phi
\left( x\right) \right) ^{2}}{\varphi (x)}\left[ \int_{-\infty }^{1}\frac{%
\phi ^{2}(y)}{\varphi (y)}dy+\int_{1}^{x}\frac{\phi ^{2}(y)}{\varphi (y)}dy%
\right] .
\end{equation*}%
To complete the proof, it is enough to observe that 
\begin{equation*}
\frac{\left( 1-\phi \left( x\right) \right) ^{2}}{\varphi (x)}\int_{1}^{x}%
\frac{\phi ^{2}(y)}{\varphi (y)}dy\leq \frac{\varphi (x)}{x^{2}}\int_{1}^{x}%
\frac{dy}{\varphi (y)}\leq 
\end{equation*}%
\begin{equation*}
\frac{\varphi (x)}{x^{2}}\int_{1}^{x}%
\frac{ydy}{\varphi (y)}=\frac{1-e^{-(x^{2}-1)/2}}{x^{2}} \leq \frac{1}{x^{2}}.
\end{equation*}

\end{proof}

\begin{lemma}
Define the sequence \begin{equation}
                        W_{n}:=\int_{-a_{n}}^{a_{n}}\frac{b^{2}(x)-\mathbb{E}\left(
b^{2}(x)\right) }{\varphi (x)}dx=\int_{-a_{n}}^{a_{n}}\frac{b^{2}(x)-\phi
(x)\left( 1-\phi (x)\right) }{\varphi (x)}dx.
                       \end{equation}
 Then exists a random variable $%
Y$ in $L^{2}$ such that $\mathbb{E}\left( W_{n}-Y\right) ^{2}\rightarrow 0.$
\end{lemma}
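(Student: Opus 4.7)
The plan is to show that $\{W_n\}$ is a Cauchy sequence in $L^{2}$, which yields a limit $Y \in L^{2}$ by completeness. To that end, note that $W_n$ has mean zero and, for $n \geq m$,
\begin{equation*}
W_{n}-W_{m} \;=\; \int_{A_{n}\setminus A_{m}} \frac{b^{2}(x)-\mathbb{E}(b^{2}(x))}{\varphi(x)}\,dx,
\end{equation*}
where $A_{k}:=[-a_{k},a_{k}]$. I would first verify that this quantity lies in $L^{2}$: on the bounded interval $[-a_{n},a_{n}]$ the weight $1/\varphi(x)$ is bounded and $b$ has continuous sample paths, so the integrand is a bounded measurable function of $(x,\omega)$ and ordinary Fubini applies without difficulty.

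Next I would square and take expectation, interchanging $\mathbb{E}$ with the double integral:
\begin{equation*}
\mathbb{E}(W_{n}-W_{m})^{2} \;=\; \iint_{(A_{n}\setminus A_{m})^{2}} \frac{\mathbb{COV}(b^{2}(x),b^{2}(y))}{\varphi(x)\varphi(y)}\,dx\,dy.
\end{equation*}
The identity $\mathbb{COV}(b^{2}(x),b^{2}(y))=2(\phi(x\wedge y)(1-\phi(x\vee y)))^{2}$ recalled in the proof of Lemma~3 shows that the integrand is \emph{non-negative}, which is the key simplification. Since $A_{n}\setminus A_{m}\subseteq\{|x|>a_{m}\}$, one bounds
\begin{equation*}
\mathbb{E}(W_{n}-W_{m})^{2} \;\leq\; \iint_{\{|x|>a_{m}\}\times\{|y|>a_{m}\}} \frac{\mathbb{COV}(b^{2}(x),b^{2}(y))}{\varphi(x)\varphi(y)}\,dx\,dy.
\end{equation*}

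The right-hand side is the tail of the finite integral established in Lemma~3, so by dominated convergence (applied to the decreasing sequence of sets $\{|x|>a_m\}^{2}$, whose intersection is empty since $a_{m}\to\infty$) it tends to $0$ as $m\to\infty$. Thus $\{W_{n}\}$ is Cauchy in $L^{2}$, and the existence of $Y\in L^{2}$ with $\mathbb{E}(W_{n}-Y)^{2}\to 0$ follows from completeness of $L^{2}$.

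The only technical point I anticipate needing a little care is the justification of Fubini (i.e. that $\mathbb{E}[(W_n-W_m)^2]$ really equals the claimed double covariance integral). Because everything is supported on the bounded interval $A_{n}$ where $\varphi$ is bounded below by $\varphi(a_{n})>0$, and $b$ is almost surely bounded on compact sets, the integrand $U(x,y,\omega)=\frac{(b^{2}(x)-\mathbb{E}b^{2}(x))(b^{2}(y)-\mathbb{E}b^{2}(y))}{\varphi(x)\varphi(y)}$ satisfies $\mathbb{E}\iint_{A_{n}^{2}}|U|\,dx\,dy<\infty$ (using, e.g., Cauchy–Schwarz and the fact that $\mathbb{E}b^{4}(x)\leq 3[\phi(x)(1-\phi(x))]^{2}$), so the Fubini interchange is legitimate. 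Beyond this, no real obstacle is expected: the result is essentially an immediate $L^{2}$-completeness argument built on top of Lemma~3.
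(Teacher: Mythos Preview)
Your proposal is correct and follows essentially the same approach as the paper: both show $\{W_n\}$ is Cauchy in $L^{2}$ by reducing to the finiteness of the double covariance integral from Lemma~3. The only cosmetic difference is that the paper computes $\mathbb{E}(W_nW_m)$ and uses $\mathbb{E}(W_n-W_m)^2=\mathbb{E}W_n^2+\mathbb{E}W_m^2-2\mathbb{E}W_nW_m$, whereas you work directly with the integral over $(A_n\setminus A_m)^2$ and exploit the non-negativity of the covariance kernel to bound by a tail; your version is slightly more explicit about the Fubini justification, which the paper leaves implicit.
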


\begin{proof}
We will demonstrate that $\{W_n\}$ is an $L^{2}$-Cauchy sequence.

\begin{equation*}
\mathbb{E}\left( W_{n}^{{}}W_{m}\right)
=\int_{-a_{n}}^{a_{n}}\int_{-a_{m}}^{a_{m}}\mathbb{E}\left( \frac{%
b^{2}(x)-\phi (x)\left( 1-\phi (x)\right) }{\varphi (x)}\frac{b^{2}(y)-\phi
(y)\left( 1-\phi (y)\right) }{\varphi (y)}\right) dxdy=
\end{equation*}%
\[
\int_{-a_{n}}^{a_{n}}\int_{-a_{m}}^{a_{m}}\frac{\mathbb{E}\left(
b^{2}(y)b^{2}(x)\right) -\phi \left( x\right) \phi \left( y\right)
\left( 1-\phi \left( x\right) \right) \left( 1-\phi \left( y\right) \right) 
}{\varphi \left( x\right) \varphi \left( y\right) }dxdy
\]
which goes to  \[
\int_{-\infty}^{+\infty}\int_{-\infty}^{+\infty}\frac{\mathbb{COV}\left(
b^{2}(y)b^{2}(x)\right)}{\varphi \left( x\right) \varphi \left( y\right) }dxdy < +\infty
\]for $n,m \rightarrow +\infty$.\\
Then $\mathbb{E}(W_n-W_m)^{2}=\mathbb{E}(W_n^{2}+W_m^{2}-2W_nW_m) \rightarrow 0,$ and $\left\{ W_{n}\right\} $ is an $%
L^{2}-$Cauchy sequence. \end{proof}

From now, we call $Y:=\int_{-\infty}^{+\infty}\frac{b^{2}(x)-\phi(x)(1-\phi(x))}{\varphi(x)}dx.$

\begin{lemma} 
There exist $Z_1$ and $Z_2$ centered, independent Gaussian random variables such that
\begin{equation*}\label{Z1 y Z2}
\left (\sqrt{n}\overline{X}_{n} , \sqrt{n}\left (S_{n}-1\right ) \right )
\xrightarrow{P} \left ( Z_1 , Z_2 \right ).
\end{equation*}
\end{lemma}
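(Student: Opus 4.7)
The plan is to recognize $\sqrt{n}\,\overline{X}_n$ and $\sqrt{n}(S_n-1)$ as continuous functionals of the empirical process $b_n$ and then exploit the Skorokhod-type a.s.\ uniform convergence $\|b_n-b\|_\infty\xrightarrow{a.s.}0$ already in place. First I would establish the integration-by-parts identities
\[
\sqrt{n}\,\overline{X}_n=-\int_{\mathbb{R}}b_n(x)\,dx,\qquad
\sqrt{n}\Bigl(\tfrac{1}{n}\sum_{i=1}^n X_i^2-1\Bigr)=-2\int_{\mathbb{R}}x\,b_n(x)\,dx,
\]
which follow from $\overline{X}_n=\int x\,dF_n$, $\int x\,d\phi=0$ and $\int x^2\,d\phi=1$ after integrating by parts; the boundary terms $x(F_n-\phi)(x)$ and $x^2(F_n-\phi)(x)$ vanish at $\pm\infty$ because $F_n$ is eventually constant and $1-\phi$ decays super-polynomially (Lemma~1).

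Next I would show $\int b_n\,dx\xrightarrow{P}\int b\,dx$ and $\int xb_n(x)\,dx\xrightarrow{P}\int xb(x)\,dx$. Splitting each integral at $|x|=M$, the piece on $\{|x|\le M\}$ is bounded by $2M\|b_n-b\|_\infty\xrightarrow{a.s.}0$, while the tails are handled via $\mathbb{E}|b_n(x)|\le\sqrt{\phi(x)(1-\phi(x))}$; by Lemma~1 this majorant (even with an extra $|x|$ weight) is integrable, and $\int_{|x|>M}\sqrt{\phi(x)(1-\phi(x))}\,dx\to 0$ uniformly in $n$, so Markov's inequality renders the tail contribution negligible with high probability. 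Setting $Z_1:=-\int b(x)\,dx$ and $Z_2:=-\int xb(x)\,dx$, both are well-defined in $L^2$ and centred Gaussian, being $L^2$ limits of Gaussian Riemann sums drawn from $b$.

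For the second coordinate I would use $S_n^2-1=\tfrac{1}{n}\sum X_i^2-1-\overline{X}_n^2$ with $\sqrt{n}\,\overline{X}_n^{\,2}=(\sqrt{n}\,\overline{X}_n)^2/\sqrt{n}=o_P(1)$ to obtain $\sqrt{n}(S_n^2-1)\xrightarrow{P}2Z_2$. Then the identity $\sqrt{n}(S_n-1)=\sqrt{n}(S_n^2-1)/(S_n+1)$, combined with $S_n\xrightarrow{a.s.}1$ (strong law) and Slutsky's theorem, yields $\sqrt{n}(S_n-1)\xrightarrow{P}Z_2$. Independence of $Z_1$ and $Z_2$ then comes from Fisher's classical result that for a Gaussian sample $\overline{X}_n$ is independent of $S_n^2$ (hence of $S_n$); independence of the pre-limit pairs is preserved in the joint weak limit.

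The main obstacle is the uniform-in-$n$ tail control in the second paragraph: lifting the local uniform convergence of $b_n$ to $b$ to the full-line integrals $\int b_n\,dx$ and $\int xb_n(x)\,dx$ requires showing that these integrals concentrate in a compact set uniformly in $n$. Once the Gaussian majorant argument above is in hand, every other step reduces to Skorokhod, Slutsky, and Fisher's independence theorem.
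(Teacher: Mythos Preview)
Your proposal is correct and its overall architecture coincides with the paper's: write $\sqrt{n}\,\overline{X}_n$ and $\sqrt{n}(S_n^2-1)$ as (integrated-by-parts) linear functionals of $b_n$, pass to the limit to obtain the corresponding functionals of the $\phi$-Brownian bridge $b$, use $\sqrt{n}(S_n-1)=\sqrt{n}(S_n^2-1)/(S_n+1)$ together with $S_n\to 1$, and deduce independence of $(Z_1,Z_2)$ from Fisher's independence of $\overline{X}_n$ and $S_n$ in the Gaussian model.

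The one substantive difference is how the full-line integrals $\int b_n$ and $\int x\,b_n$ are shown to converge. You truncate at $|x|=M$, control the compact part by $\|b_n-b\|_\infty$, and control the tails uniformly in $n$ via the moment bound $\E|b_n(x)|\le\sqrt{\phi(x)(1-\phi(x))}$ and Markov's inequality. The paper instead invokes a \emph{weighted} uniform convergence result (Shorack--Wellner) to get $\sup_x|b_n(x)-b(x)|/\sqrt{\varphi(x)}\xrightarrow{P}0$, from which $\int|x|^\alpha|b_n-b|\,dx\le\bigl(\sup_x|b_n-b|/\sqrt{\varphi}\bigr)\int|x|^\alpha\sqrt{\varphi(x)}\,dx\to 0$ follows in one line. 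Your route is more elementary and self-contained; the paper's is shorter but relies on an external theorem. Both yield the same limits $Z_1=-\int b=\int x\,db$ and $Z_2=-\int x\,b=\tfrac12\int x^2\,db$.
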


\begin{proof}
We \ define $Y_{n}:=\sup_{x\in \mathbb{R}}\frac{\left\vert
b_{n}(x)-b(x)\right\vert }{\sqrt{\varphi \left( x\right) }}$, and we will apply
theorem 1.1 of Shorack \& Wellner (1982) to $q(t)=\sqrt{\varphi \left( \phi
^{-1}\left( t\right) \right) }$. Set $\| f \|= \sup_{x \in [0,1]} |f(x)|$, and $B_n(t):=b_n \left ( \phi^{-1}(t) \right)$, 
$B(t):=b \left( \phi^{-1}(t) \right)$. Then  $Y_n= \left \| \frac{B_n-B}{q} \right \|$. Using Lemma 2, that $\frac{%
q^{2}(t)}{t\ln \left( -\ln t\right) }\rightarrow +\infty $ for $%
t\rightarrow 0,$ then $Y_{n}\overset{P}{\rightarrow }0.$\\
$\int_{-\infty}^{+\infty}|x|^{\alpha}|b_n(x)-b(x)|dx \leq Y_n \int_{-\infty}^{+\infty}|x|^{\alpha}\sqrt{\varphi(x)}dx 
\xrightarrow{P} 
0 $ for all $\alpha>0.$ Then, 

\noindent $\sqrt{n}\overline{X}_{n}=\sqrt{n}\int_{-\infty }^{+\infty }xd\left(
F_{n}(x)-\phi (x)\right) =\int_{-\infty }^{+\infty }xdb_{n}(x)\xrightarrow{P}
\int_{-\infty }^{+\infty }xdb(x):=Z_{1}.$

\noindent  $\sqrt{n}(S_n-1)=\frac{\sqrt{n}(S_{n}^{2}-1)}{S_n+1}=\frac{1}{S_n+1} \left (
\int_{-\infty }^{+\infty }x^{2}db_{n}(x)-\sqrt{n}\overline{X}_{n}^{2} \right ) \xrightarrow{P} \frac{1}{2}\int_{-\infty
}^{+\infty }x^{2}db(x):=Z_{2}. $\\
Observe that $Z_1$, $Z_2$ are centered Gaussian random variables, and $\mathbb{V}(Z_1)=1, $ $\mathbb{V}(Z_2)=1/2.$\\
As $\sqrt{n}\overline{X}_{n}$ and $S_n$ are independent for all $n$, it follows that $Z_1$ and $Z_2$ are independent too.\end{proof}

\begin{lemma}
 \[
 \mathbb{E}\left( b_{n}^{2}(x)b_{n}^{2}(y)\right) = \mathbb{E}\left( b^{2}\left( x\right) b^{2}(y)\right) +\frac{p(x,y)}{n}
\] where \[p(x,y)= \allowbreak \phi \left( x\wedge y\right) \left(
1-\phi \left( x\vee y\right)\right) \frac{1+6\phi \left( x\vee y\right)\phi \left( x\wedge y\right)-2\phi \left( x\vee y\right)-4\phi \left( x\wedge y\right)  }{n}\] 
 and $\left\{ b\left( x\right) \right\} _{x\in \mathbb{R}}$ is any 
$\phi -$Brownian motion.
\end{lemma}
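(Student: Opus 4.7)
The plan is to compute both sides of the identity explicitly: the left-hand side via a direct combinatorial expansion of the double sum defining $b_n$, and $\mathbb{E}(b^2(x)b^2(y))$ through the Gaussian fourth-moment (Isserlis/Wick) formula, then verify algebraically that the residual is $p(x,y)/n$. The natural starting point is to set $U_i:=\mathbf{1}_{\{X_i\le x\}}-\phi(x)$ and $V_i:=\mathbf{1}_{\{X_i\le y\}}-\phi(y)$, so that $b_n(x)=\frac{1}{\sqrt n}\sum_i U_i$, $b_n(y)=\frac{1}{\sqrt n}\sum_i V_i$, and
\begin{equation*}
b_n^2(x)\,b_n^2(y)=\frac{1}{n^2}\sum_{i,j,k,\ell=1}^{n}U_iU_jV_kV_\ell.
\end{equation*}

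Next I would classify the quadruples $(i,j,k,\ell)$ by the partition they induce. Since $(U_i,V_i)$ are iid across $i$ and $\mathbb{E}U_i=\mathbb{E}V_i=0$, only those patterns whose indices can be grouped into pairs (or a single $4$-tuple) give a non-zero contribution. This leaves exactly four kinds of terms: (i) $i=j=k=\ell$, yielding $n\,\mathbb{E}(U_1^2V_1^2)$; (ii) $i=j\neq k=\ell$, yielding $n(n-1)\,\mathbb{E}(U_1^2)\mathbb{E}(V_1^2)$; and (iii–iv) the two cross pairings $i=k\neq j=\ell$ and $i=\ell\neq j=k$, together yielding $2n(n-1)\,(\mathbb{E}(U_1V_1))^2$. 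Dividing by $n^2$ and rearranging gives
\begin{equation*}
\mathbb{E}(b_n^2(x)b_n^2(y))=\mathbb{E}(U_1^2)\mathbb{E}(V_1^2)+2(\mathbb{E}(U_1V_1))^2+\frac{1}{n}\Bigl[\mathbb{E}(U_1^2V_1^2)-\mathbb{E}(U_1^2)\mathbb{E}(V_1^2)-2(\mathbb{E}(U_1V_1))^2\Bigr].
\end{equation*}
Since $b(x)$ is a centered Gaussian process with $\mathrm{Var}(b(x))=\phi(x)(1-\phi(x))$ and $\mathrm{Cov}(b(x),b(y))=\phi(x\wedge y)-\phi(x)\phi(y)=\phi(x\wedge y)(1-\phi(x\vee y))$, Isserlis' theorem gives $\mathbb{E}(b^2(x)b^2(y))=\mathrm{Var}(b(x))\mathrm{Var}(b(y))+2\,\mathrm{Cov}(b(x),b(y))^2$, which matches the first two terms above. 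Hence
\begin{equation*}
\mathbb{E}(b_n^2(x)b_n^2(y))-\mathbb{E}(b^2(x)b^2(y))=\frac{1}{n}\Bigl[\mathbb{E}(U_1^2V_1^2)-\mathbb{E}(U_1^2)\mathbb{E}(V_1^2)-2(\mathbb{E}(U_1V_1))^2\Bigr].
\end{equation*}

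The remaining task, which is the main obstacle, is a purely algebraic simplification of the bracketed expression into the stated closed form $\phi(x\wedge y)(1-\phi(x\vee y))\bigl[1+6\phi(x\vee y)\phi(x\wedge y)-2\phi(x\vee y)-4\phi(x\wedge y)\bigr]$. To carry it out I would assume without loss of generality $x\le y$, write $a:=\phi(x)=\phi(x\wedge y)$ and $b:=\phi(y)=\phi(x\vee y)$, and expand $U_1^2=\mathbf{1}_{\{X_1\le x\}}(1-2a)+a^2$ and similarly for $V_1^2$; then $U_1^2V_1^2$ becomes a linear combination of $\mathbf{1}_{\{X_1\le x\}}\mathbf{1}_{\{X_1\le y\}}=\mathbf{1}_{\{X_1\le x\}}$, $\mathbf{1}_{\{X_1\le x\}}$ and $\mathbf{1}_{\{X_1\le y\}}$, whose expectations are $a$, $a$ and $b$. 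Plugging these in, together with $\mathbb{E}(U_1V_1)=a(1-b)$, $\mathbb{E}(U_1^2)=a(1-a)$ and $\mathbb{E}(V_1^2)=b(1-b)$, and collecting terms in the variables $a,b$ and the common factor $a(1-b)$, should yield the stated polynomial $1+6ab-2b-4a$, thereby proving the lemma.
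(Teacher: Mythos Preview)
Your proposal is correct and somewhat cleaner than the paper's own argument. The paper proceeds by expanding $(F_n(x)-\phi(x))^2(F_n(y)-\phi(y))^2$ into nine cross terms and then computing, one by one, the mixed moments $\mathbb{E}\!\left(F_n(x)F_n(y)\right)$, $\mathbb{E}\!\left(F_n^2(x)F_n(y)\right)$ and $\mathbb{E}\!\left(F_n^2(x)F_n^2(y)\right)$ by counting coincidences among indices $i,j,k,\ell$; only after all these pieces are combined does the expression collapse to $\mathbb{E}\!\left(b^2(x)b^2(y)\right)+z(t-1)(2t+4z-6tz-1)/n$. You instead center from the outset, writing $b_n$ in terms of the zero-mean variables $U_i,V_i$; this kills immediately every index pattern containing a singleton, so only the four pairing patterns survive, and the Isserlis formula then identifies the leading term as $\mathbb{E}\!\left(b^2(x)b^2(y)\right)$ without any further algebra. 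What remains is the single fourth-moment computation $\mathbb{E}(U_1^2V_1^2)$, and the factorisation of the $O(1/n)$ bracket through $a(1-b)$ (which works since $b=1$ is visibly a root of the residual polynomial in $b$) gives exactly $1+6ab-2b-4a$. Both routes are elementary combinatorics, but your centering-plus-Wick approach is shorter and makes the structure of the answer more transparent; the paper's route has the minor advantage of producing the intermediate moments of $F_n$, which are not otherwise needed here.
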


\begin{proof}

For simplicity in notation, we will call $z=\phi \left( x\wedge y\right) $
and $t=\phi \left( x\vee y\right) .$
\[
 \mathbb{E}\left( b_{n}^{2}(x)b_{n}^{2}(y)\right) =n^{2}\mathbb{E}\left(
F_{n}(x)-\phi (x)\right) ^{2}\left( F_{n}(y)-\phi (y)\right) ^{2}=
\]

\begin{equation}\label{bcuadradocuadrado}
 n^{2}\mathbb{E}\left( F_{n}^{2}(x)-2F_{n}(x)\phi (x)+\phi ^{2}\left(
x\right) \right) \left( F_{n}^{2}(y)-2F_{n}(y)\phi (y)+\phi ^{2}\left(
y\right) \right) 
\end{equation}

To calculate (\ref{bcuadradocuadrado}) we will use that $\mathbb{E}\left( F_{n}(x)\right) =\phi \left( x\right) $
and $\mathbb{E}\left( F_{n}^{2}(x)\right) =\frac{\phi \left( x\right) \left(
1-\phi \left( x\right) \right) }{n}+\phi ^{2}(x)$ for all $x,$ and \ the
formulas for $\mathbb{E}\left( F_{n}(x)F_{n}(y)\right) ,$ $\mathbb{E}\left(
F_{n}^{2}(x)F_{n}(y)\right) $ and $\mathbb{E}\left(
F_{n}^{2}(x)F_{n}^{2}(y)\right) $ that we will obtain. 
First, we observe that \begin{equation} \label{Fcuadradocuadrado}
                        n^{2}\mathbb{E}\left( F_{n}(x)F_{n}(y)\right) =\sum_{i,j=1}^{n}P\left(
X_{i}\leq x,X_{j}\leq y\right) =nz+n(n-1)zt.
                       \end{equation}
                       
If we define $%
I:=-2\phi \left( y\right) \mathbb{E}\left( F_{n}^{2}(x)\phi \left( y\right)
\right) -2\phi \left( x\right) \mathbb{E}\left( F_{n}^{2}(y)\phi \left(
x\right) \right) $ then (\ref{bcuadradocuadrado}) (after some calculation) is
equal to 
\[
n^{2}\mathbb{E}F_{n}^{2}(x)F_{n}^{2}(y)+n^{2}I+5n
z^{2}t+nzt^{2} +\left( 3n^{2}-6n\right) z^{2}t^{2}.
\]
If we procced similarly to (\ref{Fcuadradocuadrado}), we obtain that
\[
n^{2}\mathbb{E}\left( F_{n}^{2}(x)F_{n}(y)\right) =\frac{1}{n}%
\sum_{i,j,k=1}^{n}P\left( X_{i}\leq x,X_{j}\leq x,X_{k}\leq y\right) =
\]%
\[
\left( n-1\right) \left( n-2\right) \phi ^{2}(x)\phi \left( y\right)
+(n-1)\phi \left( x\right) \phi \left( y\right) +2(n-1)\phi \left( x\wedge
y\right) \phi \left( x\right) +\phi \left( x\wedge y\right) ,
\]%
and%
\[
n^{2}\mathbb{E}\left( F_{n}^{2}(x)F_{n}^{2}(y)\right) =\frac{1}{n^{2}}%
\sum_{i,j,k,l=1}^{n}P\left( X_{i}\leq x,X_{j}\leq x,X_{k}\leq y,X_{l}\leq
y\right) 
\]%
\[
\frac{n-1}{n}\left(
4z^{2}+3zt+5(n-2)z^{2}t+(n-2)zt^{2}+(n^{2}-5n+6)z^{2}t^{2}\right) +z/n
\]%
Then 
\[
n^{2}I=-4\left( n^{2}-3n+2\right) z^{2}t^{2}+\left( 10-10n\right)
z^{2}t+\left( 2-2n\right) zt^{2}-2z^{2}-2zt.
\]%
Finally, $\mathbb{E}\left( b_{n}^{2}(x)b_{n}^{2}(y)\right) $ is equal to 
\[
-5tz^{2}-t^{2}z+3t^{2}z^{2}+tz+2z^{2}+z\left( t-1\right) \frac{2t+4z-6tz-1}{n%
}=\]
\begin{equation}\label{bnb}
 \mathbb{E}\left( b^{2}\left( x\right) b^{2}(y)\right) +\allowbreak z\left(
t-1\right) \frac{2t+4z-6tz-1}{n}.
\end{equation}
where $\left\{ b\left( x\right) \right\} _{x\in \mathbb{R}}$ is any 
$\phi -$Brownian motion.
\end{proof}

\begin{lemma}
 \begin{equation}
  \int_{-a_{n}}^{a_{n}}\frac{%
b_{n}^{2}(x)-\phi (x)\left( 1-\phi \left( x\right) \right) }{\varphi \left(
x\right) }dx\overset{P}{\rightarrow }\int_{-\infty }^{+\infty }\frac{%
b^{2}(x)-\phi (x)\left( 1-\phi \left( x\right) \right) }{\varphi \left(
x\right) }dx.
 \end{equation}

\end{lemma}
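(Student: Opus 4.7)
The plan is to write the difference of the two sides of the desired statement as
\[
\int_{-a_n}^{a_n}\frac{b_n^2(x) - b^2(x)}{\varphi(x)}\,dx \;+\; \bigl(W_n - Y\bigr),
\]
and prove that each summand is $o_P(1)$. The second summand is immediate from Lemma 4, which already gives $W_n \to Y$ in $L^{2}$, and hence in probability. So the work reduces to controlling $R_n := \int_{-a_n}^{a_n}(b_n^2-b^2)/\varphi\,dx$.

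To handle $R_n$, I would factor $b_n^2 - b^2 = (b_n-b)(b_n+b)$ and reuse the quantity
\[
Y_n := \sup_{x \in \mathbb{R}} \frac{|b_n(x) - b(x)|}{\sqrt{\varphi(x)}}
\]
already appearing in the proof of Lemma 5, where it is shown via Shorack \& Wellner (1982, Theorem 1.1) that $Y_n \xrightarrow{P} 0$. Combining $|b_n + b| \le 2|b| + |b_n - b|$ with $|b_n - b| \le Y_n \sqrt{\varphi}$ gives the pointwise bound $|b_n^2 - b^2| \le Y_n^2\,\varphi + 2 Y_n \sqrt{\varphi}\,|b|$, so after dividing by $\varphi$ and integrating over $[-a_n,a_n]$ we get
\[
|R_n| \le 2 a_n Y_n^2 \;+\; 2 Y_n \int_{-a_n}^{a_n} \frac{|b(x)|}{\sqrt{\varphi(x)}}\,dx.
\]

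The remaining integral is handled by Gaussian bookkeeping based on the preceding lemmas: $\mathbb{E}|b(x)| \le \sqrt{\phi(x)(1-\phi(x))}$, and Lemma 1 yields $\sqrt{\phi(x)(1-\phi(x))/\varphi(x)} = O(|x|^{-1/2})$ as $|x| \to \infty$, so that $\mathbb{E}\int_{-a_n}^{a_n} |b|/\sqrt{\varphi}\,dx = O(\sqrt{a_n})$ and the integral is $O_P(\sqrt{a_n})$. Since $a_n$ grows only like $\sqrt{2\ln n}$ (Lemma 2), both $a_n Y_n^2$ and $Y_n \sqrt{a_n}$ will be $o_P(1)$ as soon as $Y_n = o_P(a_n^{-1/2}) = o_P((\ln n)^{-1/4})$, which comes from the same Shorack--Wellner estimate with a slightly sharper choice of the normalising weight $q$.

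The hard part will be precisely this rate balance: the empirical-process fluctuation $Y_n$ must decay faster than $a_n^{-1/2}$ on an interval that itself diverges, which is exactly why the denominator in $Y_n$ must be $\sqrt{\varphi}$ rather than $\varphi$. Everything else---the decomposition into $W_n$ plus a coupling error, the $L^{2}$ convergence of $W_n$, and the Cauchy--Schwarz / tail estimates for $|b|$---is routine; the delicate step is ensuring that the coupling smallness survives multiplication by the growing $L^{1}$-norm of $|b|/\sqrt{\varphi}$ on $[-a_n,a_n]$.
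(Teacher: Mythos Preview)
Your decomposition into $R_n + (W_n - Y)$ is natural, and the treatment of $W_n - Y$ via Lemma~4 is fine. The gap is exactly where you flag it: you need $\sqrt{a_n}\, Y_n \to 0$ in probability, and this does \emph{not} follow from Shorack--Wellner by ``a slightly sharper choice of $q$''. That theorem is a statement of the form $\|(B_n - B)/q\| \to 0$ for any admissible weight $q$; it carries no rate in $n$. Passing to a smaller admissible $q$ yields a stronger pointwise bound $|b_n - b| \le Z_n\, q$ with $Z_n \to 0$, but when you feed this back into $R_n$ the integral $\int_{-a_n}^{a_n} q^2/\varphi\,dx$ still diverges: the admissibility condition $q^2(t)/\bigl(t\log\log(1/t)\bigr)\to\infty$ forces $q(x)^2/\varphi(x) \gg (\log|x|)/|x|$, and $\int_{-a_n}^{a_n}(\log|x|)/|x|\,dx \asymp (\log a_n)^2$. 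So you end up needing a rate on $Z_n$ after all; sharpening $q$ only trades one slowly diverging factor for another. Closing your argument would require a quantitative weighted approximation (of KMT/Cs\"org\H{o}--Horv\'ath type), which is a substantially heavier tool than anything invoked in the paper.

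The paper sidesteps this entirely by not coupling $b_n$ to $b$ on the growing tails. It truncates at a \emph{fixed} level $\delta$: on $[-\delta,\delta]$ the unweighted convergence $\|b_n - b\|\to 0$ suffices since $1/\varphi$ is bounded there, while on $[\delta,a_n]$ and $[-a_n,-\delta]$ the tail integrals of $\bigl(b_n^2 - \phi(1-\phi)\bigr)/\varphi$ are shown to vanish directly in $L^2$. The key input is Lemma~6, which gives the exact identity $\mathbb{E}\bigl(b_n^2(x)b_n^2(y)\bigr) = \mathbb{E}\bigl(b^2(x)b^2(y)\bigr) + p(x,y)/n$; the Brownian-bridge covariance piece is then handled by Lemma~3 as $\delta\to\infty$, and the $p(x,y)/n$ correction is bounded explicitly (via L'H\^opital). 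No rate on the empirical coupling is ever needed---only second-moment identities and the finiteness established in Lemma~3.
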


\begin{proof}
 We decompose 
\[
\int_{-a_{n}}^{a_{n}}\frac{b_{n}^{2}(x)-\phi (x)\left( 1-\phi
(x)\right) }{\varphi (x)}dx:=Y_{n,\delta }+Z_{n,\delta }+W_{n,\delta } 
\]%
where 
$
Y_{n,\delta }:=\int_{-\delta }^{\delta }\frac{b_{n}^{2}(x)-\phi (x)\left(
1-\phi (x)\right) }{\varphi (x)}dx\text{, }Z_{n,\delta
}:=\int_{-a_{n}}^{-\delta }\frac{b_{n}^{2}(x)-\phi (x)\left( 1-\phi
(x)\right) }{\varphi (x)}dx\text{ and } \\ W_{n,\delta }:=\int_{\delta }^{a_{n}}%
\frac{b_{n}^{2}(x)-\phi (x)\left( 1-\phi (x)\right) }{\varphi (x)}dx. 
$
We will prove that $\lim_{\delta \rightarrow +\infty }\lim_{n\rightarrow
+\infty }Y_{n,\delta }=Y$, \\ $\lim_{\delta \rightarrow +\infty
}\lim_{n\rightarrow +\infty }Z_{n,\delta }=0$ and $\lim_{\delta \rightarrow
+\infty }\lim_{n\rightarrow +\infty }W_{n,\delta }=0$ in probability.

Indeed 
\[
\lim_{\delta \rightarrow +\infty }\lim_{n\rightarrow +\infty }Y_{n,\delta
}=\lim_{\delta \rightarrow +\infty }\int_{-\delta }^{\delta }\frac{%
b^{2}(x)-\phi (x)\left( 1-\phi (x)\right) }{\varphi (x)}dx= \] \[\int_{-\infty
}^{+\infty }\frac{b^{2}(x)-\phi (x)\left( 1-\phi (x)\right) }{\varphi (x)}%
dx. 
\]%
To prove that $\lim_{\delta \rightarrow +\infty }\lim_{n\rightarrow +\infty
}W_{n,\delta }=0$ in probability, it is enough to show that $\lim_{\delta
\rightarrow +\infty }\lim_{n\rightarrow +\infty }\mathbb{E}\left(
W_{n,\delta }^{2}\right) =0.$

 \[ \mathbb{E}\left( W_{n,\delta }^{2}\right) =\mathbb{E}\left( \int_{\delta
}^{a_{n}}\frac{b_{n}^{2}(x)-\phi (x)\left( 1-\phi (x)\right) }{\varphi (x)}%
dx\int_{\delta }^{a_{n}}\frac{b_{n}^{2}(y)-\phi (y)\left( 1-\phi (y)\right) 
}{\varphi (y)}dy\right) = \] 
\begin{equation}\label{note}
\int_{\delta }^{a_{n}}\int_{\delta }^{a_{n}}\frac{%
\mathbb{E}\left( b_{n}^{2}(x)b_{n}^{2}(y)\right) -\phi (x)\left( 1-\phi
(x)\right) \phi (y)\left( 1-\phi (y)\right) }{\varphi (x)\varphi (y)}dxdy
\end{equation}
Then, using Lemma 6 we obtain that (\ref{note}) is equal to

\[
\int_{\delta }^{a_{n}}\int_{\delta }^{a_{n}}\frac{\mathbb{COV}\left(
b^{2}(x),b^{2}(y)\right) }{\varphi (x)\varphi (y)}dxdy+\frac{1}{n}%
\int_{\delta }^{a_{n}}\int_{\delta }^{a_{n}}\frac{p\left( x,y\right) }{%
\varphi (x)\varphi (y)}dxdy. 
\]

From Lemma 3 we obtain that 
\[
\lim_{\delta \rightarrow +\infty }\lim_{n\rightarrow +\infty }\int_{\delta
}^{a_{n}}\int_{\delta }^{a_{n}}\frac{\mathbb{COV}\left(
b^{2}(x),b^{2}(y)\right) }{\varphi (x)\varphi (y)}dxdy=0. 
\]

Finally, we will show that $\lim_{n\rightarrow +\infty }\frac{1}{n}%
\int_{\delta }^{a_{n}}\int_{\delta }^{a_{n}}\frac{p\left( x,y\right) }{%
\varphi (x)\varphi (y)}dxdy=0$ for all $\delta >0.$

Indeed, for certain $c>0$, 
\[
\frac{1}{n}\int_{\delta }^{a_{n}}\int_{\delta }^{a_{n}}\frac{\left\vert
p\left( x,y\right) \right\vert }{\varphi (x)\varphi (y)}dxdy\leq \frac{c}{n}%
\int_{\delta }^{a_{n}}\int_{\delta }^{a_{n}}\frac{\phi \left( x\wedge
y\right) \left( 1-\phi \left( x\vee y\right) \right) }{\varphi (x)\varphi (y)%
}dxdy. 
\]%
Then, applying L'H\^{o}pital rule, we obtain that 
\[
\lim_{n\rightarrow +\infty }\frac{1}{n}\int_{\delta }^{a_{n}}\int_{\delta
}^{a_{n}}\frac{\phi \left( x\wedge y\right) \left( 1-\phi \left( x\vee
y\right) \right) }{\varphi (x)\varphi (y)}dxdy= \] \[\lim_{n\rightarrow +\infty }%
\frac{\partial }{\partial n}\left( \int_{\delta }^{a_{n}}\int_{\delta
}^{a_{n}}\frac{\phi \left( x\wedge y\right) \left( 1-\phi \left( x\vee
y\right) \right) }{\varphi (x)\varphi (y)}dxdy\right) = 
\]

\[
\lim_{n\rightarrow +\infty }\frac{2}{n^{2}\varphi ^{2}\left( a_{n}\right) }%
\int_{\delta }^{a_{n}}\frac{\phi \left( x\right) \left( 1-\phi \left(
a_{n}\right) \right) }{\varphi \left( x\right) }dx=\lim_{n\rightarrow
+\infty }\frac{2}{n^{3}\varphi ^{2}\left( a_{n}\right) }\int_{\delta
}^{a_{n}}\frac{\phi \left( x\right) }{\varphi \left( x\right) }dx=0. 
\]

The case $\lim_{\delta \rightarrow +\infty }\lim_{n\rightarrow +\infty
}Z_{n,\delta }=0$ is solved similarly.

\end{proof}

In the next lemmas and propositions, we will use the following equalities:
\begin{equation}\label{Taylor}
 \phi \left( x_{n}\right) -\phi \left( x\right) =\varphi \left( c_{n}\right)
\left( x_{n}-x\right) \text{ for }c_{n}\in \left( x_{n}\wedge x,x_{n}\vee
x\right).
\end{equation}

\begin{equation}\label{unomas}
 \frac{\varphi ^{2}\left( c_{n}\right) }{\varphi ^{2}\left( x\right) }%
=e^{x^{2}-c_{n}^{2}}=1+e^{d_{n}}\left( x^{2}-c_{n}^{2}\right) \text{ for }%
\left\vert d_{n}\right\vert \leq \left\vert x^{2}-c_{n}^{2}\right\vert .
\end{equation}

\begin{equation}\label{unomas/2}
 \frac{\varphi \left( c_{n}\right) }{\varphi \left( x\right) }=e^{\frac{%
x^{2}-c_{n}^{2}}{2}}=1+e^{d_{n}^{\prime }}\frac{\left(
x^{2}-c_{n}^{2}\right) }{2}\text{ for }\left\vert d_{n}^{\prime }\right\vert
\leq \frac{\left\vert x^{2}-c_{n}^{2}\right\vert }{2}.
\end{equation}

Observe that if $x\in \left( -a_{n},a_{n}\right) $, then $\left\vert
d_{n}\right\vert \leq \left\vert x^{2}-x_{n}^{2}\right\vert \leq \frac{%
Aa_{n}^{2}}{\sqrt{n}}$ for certain random variable $A,$ and $\left\vert
d'_{n}\right \vert \leq \frac{\left\vert x^{2}-x_{n}^{2} \right \vert}{2} \leq \frac{%
A'a_{n}^{2}}{\sqrt{n}}$ for certain random variable $A'.$

In the next proposition, we prove that the term $H_n$ in decomposition (\ref
{Rn}) converges in law.

\begin{proposition}
\begin{equation*}
H_{n}=\int_{-a_{n}}^{a_{n}}\frac{b_{n}^{2}(x_{n})-\mathbb{E}\left(
b_{n}^{2}(x_{n})\right) }{\varphi (x)}dx \xrightarrow{P} \int_{-\infty
}^{+\infty }\frac{b^{2}(x)-\mathbb{E}\left( b^{2}(x)\right) }{\varphi (x)}dx.
\end{equation*}

\end{proposition}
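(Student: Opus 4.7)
The plan is to reduce Proposition~1 to Lemma~7 via the change of variables $u=x_n=S_nx+\overline X_n$. Interpreting $\E(b_n^2(x_n))$ as the variance function $y\mapsto\phi(y)(1-\phi(y))$ evaluated at $y=x_n$, the substitution gives
\[
H_n=\int_{\alpha_n}^{\beta_n}\frac{b_n^2(u)-\phi(u)(1-\phi(u))}{\varphi(u)}\,r_n(u)\,du,
\]
with $\alpha_n=-a_nS_n+\overline X_n$, $\beta_n=a_nS_n+\overline X_n$, and warping factor
\[
r_n(u):=\frac{\varphi(u)}{S_n\,\varphi\!\left(\frac{u-\overline X_n}{S_n}\right)}=\frac{1}{S_n}\exp\!\left\{\frac{u^2(1-S_n^2)-2u\overline X_n+\overline X_n^2}{2S_n^2}\right\}.
\]

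I would next establish $\sup_{|u|\le 2a_n}|r_n(u)-1|\xrightarrow{P}0$. Lemma~5 yields $\sqrt n\,\overline X_n=O_p(1)$ and $\sqrt n(S_n-1)=O_p(1)$, hence $\sqrt n(S_n^2-1)=O_p(1)$. Combined with Lemma~2 ($a_n^p/n^q\to 0$), each summand in the exponent of $r_n$ is $o_p(1)$ uniformly on $|u|\le 2a_n$: the leading term $u^2(1-S_n^2)/(2S_n^2)$ is $O_p(a_n^2/\sqrt n)$, the cross term is $O_p(a_n/\sqrt n)$, and the remaining piece is $O_p(1/n)$. Since $S_n\xrightarrow{P}1$, the desired uniform convergence follows.

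The final step adapts the compact/tail splitting from the proof of Lemma~7 to the transformed integral. For $\delta>0$, write $H_n=Y_n^\delta+Z_n^\delta+W_n^\delta$ with $Y_n^\delta$ the integral on $u\in[-\delta,\delta]$ and $Z_n^\delta,W_n^\delta$ the tails toward $\alpha_n$ and $\beta_n$. On the compact piece the Skorokhod bound $\|b_n-b\|_\infty\to 0$ a.s.\ combined with $r_n\to 1$ uniformly gives $Y_n^\delta\xrightarrow{P}\int_{-\delta}^\delta[b^2(u)-\phi(u)(1-\phi(u))]/\varphi(u)\,du$, which tends to $Y$ as $\delta\to\infty$. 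For the tails, Step~2 provides $r_n(u)\le 2$ eventually, so $\E(W_n^\delta)^2$ is bounded by a constant times the analogous expression appearing in the proof of Lemma~7; the covariance estimate of Lemma~3 together with the $1/n$ correction of Lemma~6 then give $\lim_{\delta\to\infty}\limsup_{n\to\infty}\E(W_n^\delta)^2=0$, and similarly for $Z_n^\delta$. Combining these limits yields $H_n\xrightarrow{P}Y$.

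The main obstacle is the tail control: the weight $1/\varphi$ blows up at $\pm a_n$ and the random rescaling $x\mapsto x_n$ distorts precisely these singularities, so a naive Cauchy--Schwarz bound fails (the $L^1$ mass of the integrand grows like $\ln\ln n$). Passing to the $u$-coordinate is the decisive move, because there the weight remains $1/\varphi$ and the entire effect of the rescaling is absorbed into the multiplicative factor $r_n-1$, which is uniformly small on the $u$-domain by Step~2; the tail $L^2$ bounds from Lemma~7 can then be reused with only cosmetic modifications.
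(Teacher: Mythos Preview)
Your approach is essentially the paper's: both perform the substitution $u=x_n=S_nx+\overline X_n$ and control the resulting density-ratio factor (your $r_n(u)$, the paper's $\varphi(x_n)/\varphi(x)$) via the expansion $e^{(x^2-x_n^2)/2}=1+O_p(a_n^2/\sqrt n)$ together with Lemma~2, then appeal to Lemma~7. The only organisational difference is that the paper factors out the ratio \emph{before} changing variables, so that Lemma~7 applies verbatim to $\frac{1}{S_n}\int_{\overline X_n-S_na_n}^{\overline X_n+S_na_n}\frac{b_n^2(t)-\phi(t)(1-\phi(t))}{\varphi(t)}\,dt$ and no tail re-splitting with the random factor $r_n$ is needed; your route works but is slightly less economical.
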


\begin{proof}
Since 
\begin{equation*}
\int_{-a_{n}}^{a_{n}}\frac{b_{n}^{2}(x_{n})-\mathbb{E}\left(
b_{n}^{2}(x_{n})\right) }{\varphi (x_{n})}dx=\frac{1}{S_{n}}%
\int_{-S_{n}a_{n}+\overline{X}_{n}}^{S_{n}a_{n}+\overline{X}_{n}}\frac{%
b_{n}^{2}(t)-\mathbb{E}\left( b_{n}^{2}(t)\right) }{\varphi (t)}dt
\end{equation*}%
converges in probability (by Lemma 7) to $\int_{-\infty }^{+\infty }\frac{b^{2}(x)-\mathbb{E}\left(
b^{2}(x)\right) }{\varphi (x)}dx$,
\begin{equation*}
H_{n}=\int_{-a_{n}}^{a_{n}}\frac{b_{n}^{2}(x_{n})-\mathbb{E}\left(
b_{n}^{2}(x_{n})\right) }{\varphi (x_{n})}\frac{\varphi \left( x_{n}\right) 
}{\varphi (x)}dx
\end{equation*}%
and due to (\ref{unomas/2}) (with $c_{n}=x_{n}$) and Lemma 3 we obtain that $H_{n}$ converges in probability to $%
\int_{-\infty }^{+\infty }\frac{b^{2}(x)-\mathbb{E}\left( b^{2}(x)\right) }{%
\varphi (x)}dx$.
\end{proof}

\begin{lemma}
\[
\int_{-a_{n}}^{a_{n}}\frac{n\left( \phi \left( x_{n}\right) -\phi \left(
x\right) \right) ^{2}}{\varphi \left( x\right) }dx=n\left( S_{n}-1\right)
^{2}\alpha _{n}+n\overline{X}_{n}^{2}\beta _{n}+Y_{n}
\]%
where $\alpha _{n}=\int_{-a_{n}}^{a_{n}}x^{2}\varphi (x)dx,$ $\beta
_{n}=\int_{-a_{n}}^{a_{n}}\varphi (x)dx$ and $\mathbb{E}\left( \left\vert
Y_{n}\right\vert \right) \rightarrow 0.$
\end{lemma}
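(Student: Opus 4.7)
My plan is to Taylor-expand both $\phi(x_n)-\phi(x)$ and the ratio $\varphi^2(c_n)/\varphi(x)$ via the identities \eqref{Taylor} and \eqref{unomas} stated just before the lemma, and then decompose the integrand into three leading terms plus a collection of remainders which I must show are negligible in $L^1$.

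First, applying \eqref{Taylor} with $x_n - x = (S_n-1)\,x + \overline{X}_n$ gives
\[
n\bigl(\phi(x_n)-\phi(x)\bigr)^2 = n\varphi^{2}(c_n)\bigl[(S_n-1)^{2}x^{2} + 2(S_n-1)\overline{X}_n\, x + \overline{X}_n^{2}\bigr].
\]
Using \eqref{unomas}, I write $\varphi^{2}(c_n)/\varphi(x) = \varphi(x) + \varphi(x)\,e^{d_n}(x^{2}-c_n^{2})$. Substituting and distributing the integral, the $\varphi(x)$ summand produces three candidate main terms,
\[
n(S_n-1)^{2}\alpha_n,\qquad 2n(S_n-1)\overline{X}_n\!\int_{-a_n}^{a_n}\! x\varphi(x)\,dx,\qquad n\overline{X}_n^{2}\beta_n,
\]
the middle one of which vanishes by the oddness of $x\varphi(x)$ on the symmetric interval. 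What remains is exactly
\[
Y_n := \int_{-a_n}^{a_n}\!\varphi(x)\,e^{d_n}(x^{2}-c_n^{2})\,n\bigl[(S_n-1)^{2}x^{2} + 2(S_n-1)\overline{X}_n\, x + \overline{X}_n^{2}\bigr]\,dx.
\]

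To prove $\mathbb{E}|Y_n|\to 0$, I would exploit the bound from the remark following \eqref{unomas/2}: for $x\in[-a_n,a_n]$, $|x^{2}-c_n^{2}|\le |x^{2}-x_n^{2}|\le A\,a_n^{2}/\sqrt{n}$, where $A$ is a nonnegative random variable dominated by a polynomial in $|\sqrt{n}(S_n-1)|$ and $|\sqrt{n}\,\overline{X}_n|$, both of which have bounded moments of all orders under $H_0$. Since $a_n^{2}/\sqrt{n}\to 0$ by Lemma 2, on the good event $G_n := \{A \le \sqrt{n}/a_n^{2}\}$ one has $|d_n| \le 1$, so $e^{d_n}$ is bounded by a fixed constant; combined with the factor $n[(S_n-1)^{2}x^{2}+2(S_n-1)\overline{X}_n\, x + \overline{X}_n^{2}]$, whose integral against $\varphi(x)$ has uniformly bounded expectation, this yields $\mathbb{E}[|Y_n|\,1_{G_n}] = O(a_n^{2}/\sqrt{n})\to 0$. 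On $G_n^{c}$, whose probability decays faster than any polynomial in $1/n$, a crude deterministic upper bound on $|Y_n|$ (using $|\phi(x_n)-\phi(x)|\le 1$ together with $\varphi(x)\ge \varphi(a_n)$ from Lemma 1) combined with Cauchy--Schwarz gives a vanishing contribution.

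\noindent\textbf{Main obstacle.} The main difficulty is upgrading the pointwise, in-probability control of $e^{d_n}(x^{2}-c_n^{2})$ to a genuine $L^{1}$ bound for $Y_n$, since $e^{d_n}$ is not deterministically bounded on the whole sample space. The good/bad event decomposition described above resolves this, but hinges on the fact that $\sqrt{n}(S_n-1)$ and $\sqrt{n}\,\overline{X}_n$ have tails light enough to beat the mild growth of $a_n=\phi^{-1}(1-1/n)\sim \sqrt{2\ln n}$; under the Gaussian null both quantities have bounded moments of all orders, so this is automatic.
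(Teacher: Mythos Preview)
Your proposal is correct and follows the paper's argument essentially verbatim: the same Taylor/\eqref{unomas} expansion, the same identification of $Y_n=\int_{-a_n}^{a_n} n e^{d_n}(x^{2}-c_n^{2})(x_n-x)^{2}\varphi(x)\,dx$, and the same use of $|x^{2}-c_n^{2}|\le A a_n^{2}/\sqrt{n}$ together with Lemma~2. The only cosmetic difference is that the paper dispatches $\mathbb{E}|Y_n|\to 0$ in one line via the bound $\frac{a_n^{2}}{\sqrt{n}}\,\mathbb{E}\bigl(Ae^{Aa_n^{2}/\sqrt{n}}\bigr)\to 0$, whereas you justify the same conclusion through a good/bad-event split; your version is more explicit but not a different route.
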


\begin{proof}
Using (\ref{Taylor}) and (\ref{unomas}), we obtain that 
\[
\int_{-a_{n}}^{a_{n}}\frac{n\left( \phi \left( x_{n}\right) -\phi \left(
x\right) \right) ^{2}}{\varphi \left( x\right) }dx=\int_{-a_{n}}^{a_{n}}%
\frac{n\varphi ^{2}\left( c_{n}\right) \left( x_{n}-x\right) ^{2}}{\varphi
^{2}\left( x\right) }\varphi (x)dx=
\]%
\[
\int_{-a_{n}}^{a_{n}}n\left( x_{n}-x\right) ^{2}\varphi
(x)dx+\int_{-a_{n}}^{a_{n}}ne^{d_{n}}\left( x^{2}-c_{n}^{2}\right) \left(
x_{n}-x\right) ^{2}\varphi (x)dx.
\]%
\[
\int_{-a_{n}}^{a_{n}}n\left( x_{n}-x\right) ^{2}\varphi
(x)dx=\int_{-a_{n}}^{a_{n}}n\left( \left( S_{n}-1\right) x+\overline{X}%
_{n}\right) ^{2}\varphi (x)dx= \] \[ n\left( S_{n}-1\right) ^{2}\alpha _{n}+n%
\overline{X}_{n}^{2}\beta _{n}.
\]
Define $Y_{n}:=\int_{-a_{n}}^{a_{n}}ne^{d_{n}}\left( x^{2}-c_{n}^{2}\right)
\left( x_{n}-x\right) ^{2}\varphi (x)dx.$ Then, using that $|d_n|\leq \frac{Aa_{n}^{2}}{\sqrt{n}}$ and Lemma 3, 
\[
\mathbb{E}\left( \left\vert Y_{n}\right\vert \right) \leq \mathbb{E}\left(
\int_{-a_{n}}^{a_{n}}ne^{d_{n}}\left\vert x^{2}-c_{n}^{2}\right\vert \left(
x_{n}-x\right) ^{2}\varphi (x)dx\right) \leq \frac{a_{n}^{2}}{\sqrt{n}}%
\mathbb{E}\left( Ae^{\frac{Aa_{n}^{2}}{\sqrt{n}}}\right) \rightarrow 0.
\]%
\end{proof}
\newpage
\begin{lemma}
\begin{equation*}
\int_{-a_{n}}^{a_{n}}\frac{n\left( \phi (x_{n})-\phi (x)\right) ^{2}}{%
\varphi (x)}dx\overset{P}{\rightarrow }Z_{1}^{2}+Z_{2}^{2}.
\end{equation*}
\end{lemma}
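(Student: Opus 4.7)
The plan is to build directly on Lemma 8, which already supplies the decomposition
$$\int_{-a_n}^{a_n}\frac{n\left(\phi(x_n)-\phi(x)\right)^2}{\varphi(x)}\,dx = n(S_n-1)^2\,\alpha_n + n\overline{X}_n^{\,2}\,\beta_n + Y_n,$$
with $\alpha_n=\int_{-a_n}^{a_n}x^2\varphi(x)\,dx$, $\beta_n=\int_{-a_n}^{a_n}\varphi(x)\,dx$ and $\mathbb{E}|Y_n|\to 0$. So the whole task reduces to identifying the probability limit of each of the three summands on the right-hand side.

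First, because $a_n\to+\infty$ (a consequence of Lemma 2, which also justifies that $a_n$ itself grows), dominated convergence gives $\alpha_n\to\int_{\mathbb{R}}x^2\varphi(x)\,dx=1$ and $\beta_n\to\int_{\mathbb{R}}\varphi(x)\,dx=1$. These are deterministic sequences and can therefore be absorbed by Slutsky's lemma. Second, Lemma 5 provides $\sqrt{n}\,\overline{X}_n\xrightarrow{P}Z_1$ and $\sqrt{n}(S_n-1)\xrightarrow{P}Z_2$, so the continuous mapping theorem applied to $t\mapsto t^2$ yields $n\overline{X}_n^{\,2}\xrightarrow{P}Z_1^2$ and $n(S_n-1)^2\xrightarrow{P}Z_2^2$. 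Combining with the limits of $\alpha_n,\beta_n$ gives $n(S_n-1)^2\alpha_n\xrightarrow{P}Z_2^2$ and $n\overline{X}_n^{\,2}\beta_n\xrightarrow{P}Z_1^2$. Third, $\mathbb{E}|Y_n|\to 0$ together with Markov's inequality yields $Y_n\xrightarrow{P}0$.

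Since convergence in probability is preserved under finite sums (no joint-law assumption is required), adding the three pieces produces the announced limit $Z_1^2+Z_2^2$. The main obstacle was, in fact, already cleared in Lemma 8: the Taylor expansion of $\phi(x_n)-\phi(x)$ and the control of the exponential remainder $e^{d_n}$ through $a_n^2/\sqrt{n}\to 0$. Given that preparatory work, the present lemma reduces to a routine Slutsky-and-continuous-mapping assembly of results already proven.
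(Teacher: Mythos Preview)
Your proof is correct and matches the paper's approach exactly: the paper's own proof simply reads ``This follows directly from Lemma 8 and Lemma 5,'' and you have spelled out precisely the Slutsky/continuous-mapping/Markov details that make that implication work.
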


\begin{proof}
This follows directly from Lemma 8 and Lemma 5.
\end{proof}

\begin{lemma}

\[
\int_{-a_{n}}^{a_{n}}\frac{\sqrt{n}b_{n}\left( x_{n}\right) \left( \phi
\left( x_{n}\right) -\phi \left( x\right) \right) }{\varphi \left( x\right) }%
dx=\sqrt{n}\frac{\left( S_{n}-1\right) }{S_{n}^{2}}Z_{n}^{\prime }+\sqrt{n}%
\frac{\overline{X}_{n}}{S_{n}^{2}}Z_{n}^{\prime \prime }+Y_{n}^{\prime }
\]%
where $Z_{n}^{\prime }=\int_{\overline{X}_{n}-a_{n}S_{n}}^{\overline{X}%
_{n}+a_{n}S_{n}}tb_{n}(t)dt,$ $Z_{n}^{\prime \prime }=\int_{\overline{X}%
_{n}-a_{n}S_{n}}^{\overline{X}_{n}+a_{n}S_{n}}b_{n}(t)dt$ and $\mathbb{E}%
\left( \left\vert Y_{n}^{\prime }\right\vert \right) \rightarrow 0.$
\end{lemma}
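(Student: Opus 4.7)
The plan is to apply the Taylor-type equality (\ref{Taylor}) to write $\phi(x_n)-\phi(x)=\varphi(c_n)(x_n-x)$, and then use (\ref{unomas/2}) to replace the ratio $\varphi(c_n)/\varphi(x)$ by $1+e^{d_n'}(x^2-c_n^2)/2$. This gives the pointwise decomposition
\begin{equation*}
\frac{\sqrt{n}\,b_n(x_n)(\phi(x_n)-\phi(x))}{\varphi(x)} = \sqrt{n}\,b_n(x_n)(x_n-x) + \sqrt{n}\,b_n(x_n)(x_n-x)\,e^{d_n'}\frac{x^2-c_n^2}{2},
\end{equation*}
and I would define $Y_n'$ to be the integral of the second summand over $[-a_n,a_n]$.

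For the first (main) summand I would use the identity $x_n-x=(S_n-1)x+\overline X_n$ and split the integral as $\sqrt{n}(S_n-1)\int_{-a_n}^{a_n} x\,b_n(x_n)\,dx + \sqrt{n}\,\overline X_n\int_{-a_n}^{a_n} b_n(x_n)\,dx$. Then the change of variable $t=S_nx+\overline X_n$ (so $dx=dt/S_n$ with limits $\overline X_n\pm a_nS_n$) converts the second piece into $\sqrt{n}\,\overline X_n\,Z_n''/S_n$, and the first piece into $\sqrt{n}(S_n-1)[Z_n'-\overline X_n\,Z_n'']/S_n^2$. Collecting coefficients and using $1/S_n-(S_n-1)/S_n^2=1/S_n^2$ produces exactly $\sqrt{n}(S_n-1)Z_n'/S_n^2+\sqrt{n}\,\overline X_n\,Z_n''/S_n^2$, as required.

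The main obstacle will be proving $\mathbb{E}|Y_n'|\to 0$. On $[-a_n,a_n]$ one has $|x_n-x|=|(S_n-1)x+\overline X_n|\le (\sqrt{n}|S_n-1|+\sqrt{n}|\overline X_n|)\,a_n/\sqrt{n}:=A\,a_n/\sqrt{n}$, where $A$ has uniformly bounded moments by Lemma 5 (tightness of $\sqrt{n}(S_n-1)$ and $\sqrt{n}\,\overline X_n$). Since $c_n$ lies between $x$ and $x_n$, the identity $|x^2-c_n^2|\le|x-c_n||x+c_n|$ gives $|x^2-c_n^2|\le(A\,a_n/\sqrt{n})(2a_n+A\,a_n/\sqrt{n})\le C\,a_n^2/\sqrt{n}$ with $C$ again having bounded moments; this is the bound the excerpt already records after (\ref{unomas/2}). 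Because $|d_n'|\le A'a_n^2/\sqrt{n}\to 0$ by Lemma 2, the factor $e^{d_n'}$ is uniformly bounded in every $L^p$ for $n$ large. Finally $|b_n(x_n)|\le\|b_n\|_\infty$, whose moments are bounded uniformly in $n$ by standard empirical process estimates. Putting the pieces together,
\begin{equation*}
\mathbb{E}|Y_n'|\le \sqrt{n}\cdot 2a_n\cdot \mathbb{E}\Bigl[\|b_n\|_\infty\,e^{A'a_n^2/\sqrt{n}}\,\frac{A\,a_n}{\sqrt{n}}\,\frac{C\,a_n^2}{2\sqrt{n}}\Bigr]\le K\,\frac{a_n^4}{\sqrt{n}}\longrightarrow 0
\end{equation*}
by Lemma 2. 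The delicate step is not any individual bound but the joint moment control of $\|b_n\|_\infty$, $A$, $C$ and $e^{d_n'}$, which I would handle by Hölder's inequality together with the known moment bounds for $\sqrt{n}\,\overline X_n$, $\sqrt{n}(S_n-1)$, and $\|b_n\|_\infty$.
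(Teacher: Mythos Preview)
Your proposal is correct and follows essentially the same route as the paper: apply (\ref{Taylor}) and (\ref{unomas/2}), isolate the main term $\sqrt{n}\int_{-a_n}^{a_n}b_n(x_n)(x_n-x)\,dx$, and reduce it via the substitution $t=S_nx+\overline X_n$ to the stated combination of $Z_n'$ and $Z_n''$; the paper does the substitution first and then splits, whereas you split in $x$ and then substitute, but the algebra is the same. Your treatment of $Y_n'$ is more explicit than the paper's (which simply says ``working similarly to Lemma~8''); one small remark is that Lemma~5 gives convergence in probability, not moment bounds, so the uniform $L^p$ control of $A=\sqrt n|S_n-1|+\sqrt n|\overline X_n|$ and of $\|b_n\|_\infty$ should be justified directly from the exact $N(0,1)$ and $\chi^2$ distributions and from standard empirical-process tail bounds, as you indicate at the end.
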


\begin{proof}
Using  (\ref{Taylor}) and (\ref{unomas/2}), observe that  
\[
\int_{-a_{n}}^{a_{n}}\frac{\sqrt{n}b_{n}\left( x_{n}\right) \left( \phi
\left( x_{n}\right) -\phi \left( x\right) \right) }{\varphi \left( x\right) }%
dx=\int_{-a_{n}}^{a_{n}}\frac{\sqrt{n}b_{n}\left( x_{n}\right) \varphi
\left( c_{n}\right) \left( x_{n}-x\right) }{\varphi \left( x\right) }dx=
\]%
\[
\int_{-a_{n}}^{a_{n}}\sqrt{n}b_{n}\left( x_{n}\right) \left( x_{n}-x\right)
dx+\frac{1}{2}\int_{-a_{n}}^{a_{n}}\sqrt{n}b_{n}\left( x_{n}\right)
e^{d_{n}^{\prime }}\left( x^{2}-c_{n}^{2}\right) \left( x_{n}-x\right) dx.
\]%
\begin{equation*}
\sqrt{n}\int_{-a_{n}}^{a_{n}}b_{n}(x_{n})(x_{n}-x)dx=\frac{\sqrt{n}}{%
S_{n}^{2}}\int_{-S_{n}a_{n}+\overline{X}_{n}}^{S_{n}a_{n}+\overline{X}%
_{n}}b_{n}(t)\left[ \left( S_{n}-1\right) t+\overline{X}_{n}\right] dt=
\end{equation*}%
\begin{equation*}
\frac{\sqrt{n}\left( S_{n}-1\right) }{S_{n}^{2}}\int_{-S_{n}a_{n}+\overline{X%
}_{n}}^{S_{n}a_{n}+\overline{X}_{n}}tb_{n}(t)dt+\frac{\sqrt{n}\overline{X}%
_{n}}{S_{n}^{2}}\int_{-S_{n}a_{n}+\overline{X}_{n}}^{S_{n}a_{n}+\overline{X}%
_{n}}b_{n}(t)dt.
\end{equation*}
Define $Y_{n}^{\prime }:=\frac{1}{2}\int_{-a_{n}}^{a_{n}}\sqrt{n}b_{n}\left(
x_{n}\right) e^{d_{n}^{\prime }}\left( x^{2}-c_{n}^{2}\right) \left(
x_{n}-x\right) dx.$ Then working similarly to Lemma 8, we obtain that 
$
\mathbb{E}\left( \left\vert Y_{n}^{\prime }\right\vert \right)  \rightarrow 0.$
\end{proof}

\begin{lemma}
\begin{equation*}
\int_{-a_{n}}^{a_{n}}\frac{\sqrt{n}b_{n}(x_{n})\left( \phi (x_{n})-\phi
(x)\right) }{\varphi (x)}dx\overset{P}{\rightarrow }-Z_{1}^{2}-Z_{2}^{2}.
\end{equation*}
\end{lemma}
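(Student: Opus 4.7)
By Lemma 10 it suffices to identify the limits of $Z_n'$ and $Z_n''$, since $\mathbb{E}|Y_n'|\to 0$ gives $Y_n'\xrightarrow{P}0$, and Lemma 5 combined with Slutsky's theorem yields $\sqrt{n}(S_n-1)/S_n^2\xrightarrow{P} Z_2$ and $\sqrt{n}\overline{X}_n/S_n^2\xrightarrow{P} Z_1$. The target limit $-Z_1^2-Z_2^2$ therefore forces the guess $Z_n'\xrightarrow{P}-Z_2$ and $Z_n''\xrightarrow{P}-Z_1$, which is what I would prove.

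To obtain these two convergences I would apply Stieltjes integration by parts to the càdlàg function $b_n$. Writing $a:=\overline{X}_n-a_nS_n$, $b:=\overline{X}_n+a_nS_n$:
\begin{equation*}
Z_n''=\bigl[t\,b_n(t)\bigr]_a^b-\int_a^b t\,db_n(t),\qquad 2Z_n'=\bigl[t^2b_n(t)\bigr]_a^b-\int_a^b t^2\,db_n(t).
\end{equation*}
For the interior integrals I would note that $\int_{-\infty}^{+\infty}t\,db_n(t)=\sqrt{n}\,\overline{X}_n$ and $\int_{-\infty}^{+\infty}t^2\,db_n(t)=\sqrt{n}(S_n^2+\overline{X}_n^2-1)$; by Lemma 5 and the fact that $\sqrt{n}\,\overline{X}_n^2=\overline{X}_n\cdot\sqrt{n}\,\overline{X}_n\xrightarrow{P}0$, these converge in probability to $Z_1$ and to $2Z_2$ respectively. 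Subtracting the tail pieces $\int_{|t|>a_n(1+o_P(1))}t^k\,db_n(t)$, which are negligible (see below), and pairing with the vanishing boundary terms, gives $Z_n''\xrightarrow{P}-Z_1$ and $Z_n'\xrightarrow{P}-Z_2$. A final application of Slutsky's theorem to the decomposition in Lemma 10 then yields $Z_2\cdot(-Z_2)+Z_1\cdot(-Z_1)+0=-Z_1^2-Z_2^2$.

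The main obstacle is controlling the boundary and tail remainders, i.e.\ showing that $a_n^2|b_n(\overline{X}_n\pm a_nS_n)|\xrightarrow{P}0$ and that the tails $\sqrt{n}\int_{|t|>a_n(1+o_P(1))}t^k\,d(F_n-\phi)(t)$ vanish for $k=1,2$. For the boundary, I would use the bound $|b_n(t)|\le|b(t)|+Y_n\sqrt{\varphi(t)}$ where $Y_n=\sup_x|b_n(x)-b(x)|/\sqrt{\varphi(x)}\xrightarrow{P}0$ (already exploited in the proof of Lemma 5); since $\mathrm{Var}(b(t))=\phi(t)(1-\phi(t))=O(1/n)$ near $t=\pm a_n$ and $\varphi(a_n)=O(a_n/n)$, we get $|b_n(\overline{X}_n\pm a_nS_n)|=O_P(1/\sqrt{n})$, so the boundary terms are $O_P(a_n^2/\sqrt{n})\to 0$ by Lemma 2. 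For the $dF_n$ tails, extreme-value considerations show that with high probability only $O_P(1)$ observations fall outside $[a,b]$ and each contributes at most $O_P(a_n)$, giving $O_P(a_n/\sqrt{n})$ and $O_P(a_n^2/\sqrt{n})$ respectively; the $d\phi$ tails are handled directly via $\int_{a_n}^{\infty}t^k\varphi(t)\,dt=O(a_n^{k-1}\varphi(a_n))$, which is again of order $a_n^k/n$, so after multiplication by $\sqrt{n}$ tends to zero by Lemma 2.
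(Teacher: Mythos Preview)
Your proposal is correct and follows essentially the same route as the paper: invoke the decomposition of Lemma~10, show $Z_n'\xrightarrow{P}-Z_2$ and $Z_n''\xrightarrow{P}-Z_1$ via integration by parts together with Lemma~5, and conclude by Slutsky. The only cosmetic difference is that you integrate by parts on the finite interval $[a,b]$ and then control the boundary and tail pieces explicitly, whereas the paper first extends the integral to all of $\mathbb{R}$ (absorbing the tails into an unspecified $\varepsilon_n\xrightarrow{P}0$) and then integrates by parts on the whole line; in effect you have supplied the details the paper leaves implicit.
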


\begin{proof}

\[ Z_{n}^{\prime }=\int_{\overline{X}_{n}-a_{n}S_{n}}^{\overline{X}%
_{n}+a_{n}S_{n}}tb_{n}(t)dt=
\int_{-\infty}^{+\infty}tb_{n}(t)dt+\varepsilon_{n}\] where $\varepsilon_{n}\xrightarrow{P}0.$

Integrating by parts and using Lemma 5, we obtain that 
\[ Z_{n}^{\prime }=-\frac{1}{2}\int_{-\infty}^{+\infty}t^{2}db_{n}(t)+\varepsilon_{n}
\xrightarrow{P}-Z_{2}.\]
Analogously $Z_{n}^{\prime \prime}\xrightarrow{P}-Z_{1}, $ and the result of the lemma it follows directly from Lemma 10.
\end{proof}

\newpage

\begin{proposition}
\begin{equation*}
I_{n}=\int_{-a_{n}}^{a_{n}}\frac{n\left( \phi (x_{n})-\phi (x)\right) ^{2}+2%
\sqrt{n}b_{n}(x_{n})\left( \phi (x_{n})-\phi (x)\right) }{\varphi (x)}dx%
\overset{P}{\rightarrow }-Z_{1}^{2}-Z_{2}^{2}.
\end{equation*}
\end{proposition}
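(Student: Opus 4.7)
The plan is essentially immediate: $I_n$ is by construction a linear combination of the two integrals handled in the two preceding lemmas. Using linearity of the integral, I would split the numerator and write
$$I_n = A_n + 2 B_n,$$
where
$$A_n := \int_{-a_{n}}^{a_{n}}\frac{n\left( \phi (x_{n})-\phi (x)\right)^{2}}{\varphi (x)}\,dx, \qquad B_n := \int_{-a_{n}}^{a_{n}}\frac{\sqrt{n}\,b_{n}(x_{n})\left( \phi (x_{n})-\phi (x)\right) }{\varphi (x)}\,dx.$$

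Then I would simply invoke the previous two convergence lemmas: the one for $A_n$ gives $A_n \xrightarrow{P} Z_1^2 + Z_2^2$, and the one for $B_n$ gives $B_n \xrightarrow{P} -(Z_1^2 + Z_2^2)$. Since convergence in probability is stable under finite linear combinations (continuous mapping theorem applied to addition), this yields
$$I_n = A_n + 2 B_n \xrightarrow{P} (Z_1^2 + Z_2^2) + 2\bigl(-(Z_1^2 + Z_2^2)\bigr) = -Z_1^2 - Z_2^2,$$
which is precisely the statement of the proposition.

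There is no real obstacle in this step. All of the hard work has already been carried out upstream: the Taylor expansions (\ref{Taylor})--(\ref{unomas/2}) of $\phi(x_n)-\phi(x)$ around $x$, together with the bounds $|d_n|, |d_n'| \le A a_n^2/\sqrt{n}$ and Lemma 3 on $a_n^p/n^q$, reduce $A_n$ and $B_n$ respectively to $n(S_n-1)^2\alpha_n + n\overline X_n^2 \beta_n$ and $\sqrt{n}(S_n-1) Z_n'/S_n^2 + \sqrt{n}\overline X_n Z_n''/S_n^2$ plus negligible remainders; the limits are then identified through Lemma 5 and an integration by parts giving $Z_n'\xrightarrow{P} -Z_2$, $Z_n''\xrightarrow{P} -Z_1$. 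The present proposition only bundles those two conclusions together.
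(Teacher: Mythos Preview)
Your proof is correct and is essentially identical to the paper's own argument: the paper simply writes ``This follows directly from lemmas 9 and 11,'' which is exactly your decomposition $I_n = A_n + 2B_n$ combined with the two limits $A_n \xrightarrow{P} Z_1^2+Z_2^2$ and $B_n \xrightarrow{P} -(Z_1^2+Z_2^2)$. (One small note: your reference to ``Lemma~3 on $a_n^p/n^q$'' should be Lemma~2 in the paper's numbering, but this is cosmetic and does not affect the argument.)
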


\begin{proof}
\noindent This follows directly from lemmas 9 and 11.
\end{proof}

Finally, to prove that the term $J_n$ in decomposition (\ref{Rn}) converges,
we need the following lemma.

\begin{lemma} The sequence 
\begin{equation*}
\int_{-a_{n}}^{a_{n}}\frac{\mathbb{E}\left( \widehat{b}%
_{n}^{2}(x)\right) -\mathbb{E}\left( b^{2}(x)\right) }{\varphi (x)}dx\end{equation*}
converges.
\end{lemma}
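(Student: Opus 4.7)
The plan is to deduce this lemma immediately from two asymptotic expansions that have already been recalled in Section 4. Denote the sequence in question by $K_n$. From Kalemkerian (2016),
\[
\int_{-a_n}^{a_n}\frac{\mathbb{E}(\widehat{b}_n^2(x))}{\varphi(x)}\,dx = \ln(\ln n) + c_n,
\]
with $\{c_n\}$ convergent. From de Wet and Venter (1972),
\[
\int_{-a_n}^{a_n}\frac{\mathbb{E}(b^2(x))}{\varphi(x)}\,dx = \int_{-a_n}^{a_n}\frac{\phi(x)\bigl(1-\phi(x)\bigr)}{\varphi(x)}\,dx = \ln(\ln n) + \ln 2 + \gamma + o(1),
\]
where $\gamma$ is Euler's constant. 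Subtracting the second from the first, the divergent $\ln(\ln n)$ terms cancel exactly and
\[
K_n = \int_{-a_n}^{a_n}\frac{\mathbb{E}(\widehat{b}_n^2(x))-\mathbb{E}(b^2(x))}{\varphi(x)}\,dx = c_n - \ln 2 - \gamma + o(1),
\]
which converges to $\lim_n c_n - \ln 2 - \gamma$. This limit is precisely the constant $c$ appearing in Theorem 1.

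It is worth stressing how this lemma feeds back into the decomposition (\ref{Rn}). Since $\mathbb{E}(H_n)=0$, we have $K_n = \mathbb{E}(I_n) + J_n$. Thus, combining the convergence of $K_n$ just established with the fact that $\mathbb{E}(I_n)$ converges (which one can read off Lemmas 8 and 10 together with $\mathbb{E}(n(S_n-1)^2)\to 1/2$ and $\mathbb{E}(n\overline{X}_n^2)=1$ under normality, since $nS_n^2\sim\chi^2_{n-1}$) yields the convergence of $J_n$, which is exactly what is needed to close the decomposition of $T_n$.

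The genuine difficulty does not lie in this lemma but rather in the expansion of $\int_{-a_n}^{a_n}\mathbb{E}(\widehat{b}_n^2(x))/\varphi(x)\,dx$ borrowed from Kalemkerian (2016): that step requires carefully tracking how the rescaling by $\overline{X}_n$ and $S_n$ modifies the leading $\ln(\ln n)$ asymptotic and verifying that it matches, up to a convergent remainder, the pure $\phi$-Brownian-bridge asymptotic computed by de Wet and Venter. Once those two $\ln(\ln n)$ terms are known to coincide, the proof of the present lemma reduces to the one-line subtraction above.
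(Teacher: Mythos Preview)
Your argument is correct: both asymptotic expansions you invoke are already stated verbatim in Section~3 of the paper, so subtracting them immediately gives convergence of $K_n$. The paper's own proof follows the same idea---cancel the $\ln(\ln n)$ divergence coming from Kalemkerian (2016) against the one coming from $\int\phi(1-\phi)/\varphi$---but instead of citing de Wet and Venter for the second expansion it re-derives it by hand: it reduces by symmetry to $[1,a_n]$, writes $\frac{1}{2}\ln\ln n$ as $\int_1^{a_n}\frac{dx}{x}$ up to a convergent remainder, and then shows directly via L'H\^opital that $\frac{1}{x}-\frac{\phi(x)(1-\phi(x))}{\varphi(x)}=o(1/x^2)$, hence integrable at infinity. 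Your route is shorter and perfectly legitimate given what Section~3 already grants; the paper's route is more self-contained.

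One small correction to your side remark: the limit of $K_n$ is \emph{not} the constant $c$ of Theorem~1. As the paper notes right after Proposition~3, $c=\lim J_n=\lim K_n+3/2$, the extra $3/2$ coming from $-\lim\mathbb{E}(I_n)$ (your own second paragraph in fact contains the ingredients for this, since $\mathbb{E}(I_n)\to -\mathbb{E}(Z_1^2)-\mathbb{E}(Z_2^2)=-3/2$). This does not affect the proof of the lemma itself, which only asserts convergence.
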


\begin{proof}
By symmetry it is enough to take the integral between 1 and $a_{n}.$
By using the results in section 3 of Kalemkerian (2016), one gets
\begin{equation*}
\int_{1}^{a_{n}}\frac{\mathbb{E}\left( \widehat{b}_{n}^{2}(x)\right) -%
\mathbb{E}\left( b^{2}(x)\right) }{\varphi (x)}dx=\frac{1}{2}\ln \ln
n+c_{n}-\int_{1}^{a_{n}}\frac{\phi (x)\left( 1-\phi (x)\right) }{\varphi (x)}%
dx=
\end{equation*}%
\begin{equation*}
\int_{1}^{a_{n}}\left( \frac{1}{x}-\frac{\phi (x)\left( 1-\phi (x)\right) }{%
\varphi (x)}\right) dx+c'_{n}.
\end{equation*} 
where $\{c_{n}\}$  and $\{c'_{n}\}$  are  convergent sequences.

Define $f(x):=\frac{1-\phi (x)}{\varphi (x)}-\frac{1}{x},$ and apply
L'Hôpital rule twice to show 
\begin{equation*}
\frac{f(x)}{\frac{1}{x^{2}}}\underset{x\rightarrow +\infty }{\rightarrow }0.
\end{equation*}
\end{proof}

\begin{proposition} The sequence 
\begin{equation*}
J_{n}=\int_{-a_{n}}^{a_{n}}\frac{\mathbb{E}\left(
b_{n}^{2}(x_{n})\right) -\mathbb{E}\left( b^{2}(x)\right) }{\varphi (x)}dx%
\end{equation*}
converges.
\end{proposition}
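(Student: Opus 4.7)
My plan is to express $J_n$ in terms of already-analysed quantities. Starting from identity (\ref{bn2}), taking expectations, dividing by $\varphi(x)$, integrating over $[-a_n,a_n]$, and using Fubini (which applies since everything in sight has finite first moment on the bounded window), I obtain
\begin{equation*}
J_n \;=\; A_n \;-\; B_n \;-\; 2\,C_n,
\end{equation*}
where $A_n := \int_{-a_n}^{a_n}\bigl(\mathbb{E}(\widehat{b}_n^2(x))-\mathbb{E}(b^2(x))\bigr)/\varphi(x)\,dx$ converges by Lemma 12, and
\begin{equation*}
B_n := \mathbb{E}\!\left[\int_{-a_n}^{a_n}\frac{n(\phi(x_n)-\phi(x))^2}{\varphi(x)}\,dx\right], \qquad C_n := \mathbb{E}\!\left[\int_{-a_n}^{a_n}\frac{\sqrt{n}\,b_n(x_n)(\phi(x_n)-\phi(x))}{\varphi(x)}\,dx\right].
\end{equation*}
It therefore suffices to show that $B_n$ and $C_n$ converge.

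For $B_n$ I would plug in Lemma 8 and take expectations termwise, giving
\begin{equation*}
B_n \;=\; \alpha_n\,\mathbb{E}(n(S_n-1)^2) + \beta_n\,\mathbb{E}(n\overline{X}_n^2) + \mathbb{E}(Y_n).
\end{equation*}
Under $H_0$ we have $\overline{X}_n\sim N(0,1/n)$, so $\mathbb{E}(n\overline{X}_n^2)=1$ exactly; and $nS_n^2\sim\chi^2_{n-1}$, so a Stirling expansion of $\mathbb{E}(S_n)=\sqrt{2/n}\,\Gamma(n/2)/\Gamma((n-1)/2)=1-3/(4n)+O(n^{-2})$ yields $\mathbb{E}(n(S_n-1)^2)=2n-1-2n\,\mathbb{E}(S_n)\to 1/2$. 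Together with $\alpha_n,\beta_n\to 1$ and $\mathbb{E}|Y_n|\to 0$ from Lemma 8, this gives $B_n\to 3/2$.

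For $C_n$ I would apply Lemma 10 analogously to write
\begin{equation*}
C_n \;=\; \mathbb{E}\!\left[\frac{\sqrt{n}(S_n-1)}{S_n^2}\,Z_n^{\prime}\right] + \mathbb{E}\!\left[\frac{\sqrt{n}\,\overline{X}_n}{S_n^2}\,Z_n^{\prime\prime}\right] + \mathbb{E}(Y_n^{\prime}),
\end{equation*}
with $\mathbb{E}|Y_n^{\prime}|\to 0$. Lemma 11 already shows these two products converge in probability to $-Z_2^2$ and $-Z_1^2$. The main obstacle is upgrading this to convergence of the expectations, i.e., establishing uniform integrability. I would do this via a uniform second-moment bound: using $\mathbb{E}(b_n^2(t))=\phi(t)(1-\phi(t))$ and Cauchy--Schwarz on the random window $[\overline{X}_n-a_nS_n,\overline{X}_n+a_nS_n]$ (in the spirit of the tail estimates in Lemmas 3 and 7) to control $\mathbb{E}(Z_n^{\prime\,2})$ and $\mathbb{E}(Z_n^{\prime\prime\,2})$ uniformly in $n$, combined with the uniform $L^{4}$-type bounds on $\sqrt{n}\,\overline{X}_n$ and $\sqrt{n}(S_n-1)/S_n^2$ that follow at once from the explicit normal and $\chi^2_{n-1}$ distributions under $H_0$. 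This yields $C_n\to -\mathbb{E}(Z_1^2)-\mathbb{E}(Z_2^2)=-3/2$.

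Putting the pieces together, $J_n\to A_\infty-3/2+3=A_\infty+3/2$, the (finite) constant $c$ appearing in Theorem 1. The bulk of the difficulty is the uniform integrability step for $C_n$; everything else reduces either to Lemma 12 or to elementary moment computations for centred normal samples.
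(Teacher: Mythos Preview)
Your decomposition $J_n = A_n - B_n - 2C_n$ is exactly the paper's: the paper inserts $\mathbb{E}(\widehat b_n^2(x))$ and writes the first piece as $-\mathbb{E}(I_n)$, which is precisely your $-B_n-2C_n$, then invokes Lemma~12 for $A_n$ and Lemmas~8 and~10 to claim $-\mathbb{E}(I_n)\to 3/2$. Your computation of $B_n\to 3/2$ and $C_n\to -3/2$ reproduces that limit, and you are in fact more careful than the paper in isolating the uniform-integrability step needed to pass from Lemma~11's convergence in probability to convergence of expectations for $C_n$; the paper leaves this entirely implicit under the phrase ``due to lemmas 8 and 10''.
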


\begin{proof}
We will see that 
\begin{equation*}
\int_{-a_{n}}^{a_{n}}\frac{\mathbb{E}\left( b_{n}^{2}(x_{n})\right) -\mathbb{%
E}\left( \widehat{b}_{n}^{2}(x)\right) }{\varphi (x)}dx\text{ and }%
\int_{-a_{n}}^{a_{n}}\frac{\mathbb{E}\left( \widehat{b}_{n}^{2}(x)\right) -%
\mathbb{E}\left( b^{2}(x)\right) }{\varphi (x)}dx\text{ are convergent.}
\end{equation*}%
The second term converges due to Lemma 12. 
Observe that from (\ref{Rn}) one gets 
\begin{equation}
 \int_{-a_{n}}^{a_{n}}\frac{\mathbb{E}\left( b_{n}^{2}(x_{n})\right) -\mathbb{%
E}\left( \widehat{b}_{n}^{2}(x)\right) }{\varphi (x)}dx=-\mathbb{E}\left ( I_{n}\right )\rightarrow 3/2
\end{equation}
due to lemmas 8 and 10.

\end{proof}

Note that the constant appearing in Theorem 1 is unknown. As seen in Lemma 12 and Proposition 3, one gets 
\[c=\lim J_n= \lim \int_{-a_n}^{a_{n}}\frac{\mathbb{E}\left( \widehat{b}_{n}^{2}(x)\right) -%
\mathbb{E}\left( b^{2}(x)\right) }{\varphi (x)}dx +3/2.\]
In Table 2, we present the results, from $m=1000$ simulations, the estimated values  of the constant $c$  for different values of $n$. 

Table 2. Estimated values of constant $c$ for different values of $n$.

$\begin{array}{|c|c|c|c|c|c|c|}
\hline
n &500 & 1000 &10000 & 20000 & 50000 & 100000   \\ 
\hline
c &0.0258 & 0.009137 & -0.022793 & -0.01752 & 0.002268 & 0.00093  \\
 \hline
 \end{array}
$

\section{Conclusions}

This paper has presented a test of normality with $\mu$ and $\sigma$ unknown, based on the statistic $%
\int_{-a_{n}}^{a_{n}}\frac{\widehat{b}_{n}^{2}(x)}{\varphi (x)}dx$, which is
a type of Cram\'er--von Mises statistic, except that it
is integrated over a particular interval, which is a function of the sample size.
This statistic was inspired by del Barrio et al.
(1999). We analysed its asymptotic behavior, which has a limit distribution
equivalent to that of S--W statistic. A comparative study of its behaviour over a
wide range of alternative hypotheses found that the test proposed here
is often better than the tests of Shapiro--Wilk and Anderson--Darling, which are
two tests that have very good performance as tests of normality. 

\section{Appendix 1. Performance of the proposed test}

\noindent We present a table, for sample size of $n=50$,
where we compare the performance of our test, namely the Truncated Cramér--von Mises test (TCVM),
with the following four tests: 1) the non-truncated Cramér--von Mises test, which integrates over all the real line (CVM);
2) the test proposed by del Barrio, Matr\'{a}n-Cuesta, and Rodr\'{\i}guez (BCMR); 3) the Anderson--Darling test
(AD) and 4) the Shapiro--Wilk test (SW).\\
The power of these tests was compared based on 10,000 replications. We chose the level of significance $\alpha=0.05$
and the empirical critical values were obtained after 50,000 replications.\\
The list of 35 alternative hypotheses considered in the present section are borrowed from Gan and Koelher (1990) and are a summary of 
a total of 69.\\
We use the notation of Gan and Koelher (1990), and the alternative hypotheses are considered in the same order as in the cited
reference.\\
LoConN$\left( p,a\right) $ indicates a mixture of N$\left( 0;1\right) $ with
probability $1-p$ and N$\left( a;1\right) $ with probability $p.$ ScConN$%
\left( p,a\right) $ indicates the mixture of N$\left( 0;1\right) $ with
probability $1-p$ and N$\left( 0;a\right) $ with probability $p.$ TruncN$%
\left( a;b\right) $ is N$\left( 0;1\right) $ truncated to the interval $%
\left( a;b\right) .$ SB$\left( a;b\right) $ is the Johnson bounded
distribution with parameters $a,b,$ and SU$\left( a;b\right) $ is the
Johnson unbounded distribution with parameters $a,b.$ Triangle I$\left(
a\right) $ is the distribution with density $f(x)=1/a-\left\vert x\right\vert
/a^{2}$ for $\left\vert x\right\vert \leq a$, and Triangle II$\left(
a\right) $ is the distribution with density 

$\bigskip $Table 1. Powers under 35 alternative hypotheses  at the 5\% level, $n=50.$

$\fbox{$%
\begin{array}{c|c|cccccc}
\text{Set} & \text{N%
${{}^o}$%
} & \text{Alternative} & \text{TCVM} & \text{CVM} & \text{BCMR} & \text{AD}
& \text{SW} \\ 
\hline
1 & 1 & \text{LoConN}\left( 0.5;4\right)  & 0.935 & 0.432 & 0.883 & 0.956 & 0.783
\\ 
& 2 & \text{LoConN}\left( 0.5;3\right)  & 0.439 & 0.044 & 0.341 & 0.480 & 0.212 \\ 
& 3 & \text{LoConN}\left( 0.5;2\right)  & 0.084 & 0.009 & 0.053 & 0.093 & 0.033 \\ 
\hline
2 & 4 & \text{SB}\left( 0;0.5\right)  & 0.958 & 0.496 & 0.957 & 0.926 & 0.880 \\ 

& 5 & \text{Unif}\left( 0;1\right)  & 0.708 & 0.124 & 0.689 & 0.616 & 0.466
\\ 
& 6 & \text{SB}\left( 0,0.707\right)  & 0.553 & 0.063 & 0.508 & 0.495 & 0.309
\\ 
 
& 7 & \text{TruncN}\left( -1;1\right)  & 0.876 & 0.375 & 0.735 & 0.350 & 0.197
\\ 
& 8 & \text{Beta}\left( 2;2\right)  & 0.163 & 0.005 & 0.117 & 0.155 & 0.051 \\ 
& 9 & \text{Triangle I}\left( 1\right)  & 0.061 & 0.002 & 0.034 & 0.055 & 0.015 \\ 
\hline
3 & 10 & \text{t}\left( 10\right)  & 0.098 & 0.199 & 0.169 & 0.113 & 0.186 \\ 
& 11 & \text{Logist}\left( 0;1\right)  & 0.123 & 0.248 & 0.204 & 0.156 & 0.243
\\

\hline
4 & 12 & \text{ScConN}\left( 0.05;3\right)  & 0.063 & 0.114 & 0.099 & 0.071 & 
0.116 \\ 
& 13 & \text{ScConN}\left( 0.05;5\right)  & 0.114 & 0.243 & 0.207 & 0.142 & 
0.234 \\ 

\hline
5 & 14 & \text{ScConN}\left( 0.1;5\right)  & 0.168 & 0.351 & 0.298 & 0.202 & 0.340
\\ 
& 15 & \text{ScConN}\left( 0.1;7\right)  & 0.297 & 0.518 & 0.467 & 0.337 & 
0.494 \\ 
\hline
6 & 16 & \text{ScConN}\left( 0.2;3\right)  & 0.099 & 0.228 & 0.189 & 0.126 & 
0.216 \\ 

& 17 & \text{ScConN}\left( 0.2;7\right)  & 0.426 & 0.649 & 0.596 & 0.306 & 
0.464 \\ 
\hline
7 & 18 & \text{Laplace}\left( 0;1\right)  & 0.455 & 0.562 & 0.539 & 0.526 & 0.581
\\

& 19 & \text{SU}\left( 0;1\right)  & 0.688 & 0.788 & 0.770 & 0.752 & 0.808\\

& 20 & \text{t}\left( 2\right)  & 0.818 & 0.881 & 0.871 & 0.854 & 0.892 \\ 
\hline
8 & 21 & \text{Beta}\left( 2;1\right)  & 0.815 & 0.310 & 0.811 & 0.750 & 0.702 \\ 
& 22 & \text{TruncN}\left( -2;1\right)  & 0.642 & 0.242 & 0.549 & 0.620 & 0.470
\\ 
& 23 & \text{Beta}\left( 3;2\right)  & 0.227 & 0.022 & 0.177 & 0.007 & 0.095 \\ 
\hline
9 & 24 & \text{SB}\left( 1;2\right)  & 0.105 & 0.052 & 0.094 & 0.089 & 0.072 \\ 

& 25 & \text{Weibull}\left( 2\right)  & 0.344 & 0.237 & 0.394 & 0.393 & 0.355
\\ 
& 26 & \text{HalfN}\left( 0;1\right)  & 0.891 & 0.665 & 0.922 & 0.815 & 0.883
\\ 
\hline
10 & 27 & \text{LoConN}\left( 0.2;3\right)  & 0.668 & 0.369 & 0.606 & 0.589 & 
0.554 \\ 
& 28 & \text{LoConN}\left( 0.2;5\right)  & 0.999 & 0.994 & 0.999 & 0.865 & 
0.989 \\ 

\hline
11 & 29 & \text{LoConN}\left( 0.1;3\right) & 0.489 & 0.487 & 0.559 & 0.610 & 0.569 \\ 
& 30 & \text{LoConN}\left( 0.1;5\right)  & 0.960 & 0.988 & 0.986 & 0.988 & 
0.989 \\ 
\hline
12 & 31 & \text{LoConN}\left( 0.05;3\right)  & 0.263 & 0.427 & 0.402 & 0.429
& 0.418 \\ 
& 32 & \text{LoConN}\left( 0.05;5\right)  & 0.758 & 0.894 & 0.882 & 0.878 & 
0.888 \\ 
\hline
13 & 33 & \text{Triangle II}\left( 1\right)  & 0.818 & 0.314 & 0.809 & 0.499 & 
0.689 \\ 

& 34 & \chi _{4}^{2} & 0.916 & 0.818 & 0.945 & 0.921 & 0.927 \\ 

& 35 & \text{Lognormal}\left( 0;1\right)  & 0.999 & 0.999 & 0.999 & 1.000 & 1.000
\\ 
\end{array}%
$}$

\noindent $f(x)=2/a-2x/a^{2}$ for $0\leq
x\leq a.$ \\
For others values of $n$, the comparision was similar to the one presented here.
The 35 alternative hypotheses are ordered in 13 types of distributions.
Type 1 includes symmetric bimodal distributions with low
skewness. Type 2 includes symmetric multimodal distributions with
low tails. Type 3 includes distributions with slightly heavier
tails than the normal distribution. Types 4, 5 and 6 are contaminations of the
normal distribution. Type 7 includes symmetrical distributions with high
skewness. The remaining types are asymmetrical distributions. Types 8
and 9 are distributions with low skewness with opposite signs. Types
10, 11 and 12 are bimodal distributions with positive  coefficients
of symmetry. Type 13 contains distributions with extreme values of
coefficients of skewness or symmetry.
\noindent In the following graphs, we compare the powers (at the 5$\%$ level) of our test (TCVM), Anderson--Darling (AD) and 
Shapiro--Wilk (SW), for  sample sizes $n=20$ and $n=50$, when the alternative hypotheses are in the lambda Tukey family. 
The critical values were obtained by simulations, with 50,000 replications. The various powers, presented in the graphs, were found by 
simulations, with 10,000 replications.

\begin{figure}[H]
  
 \includegraphics[scale=0.4]{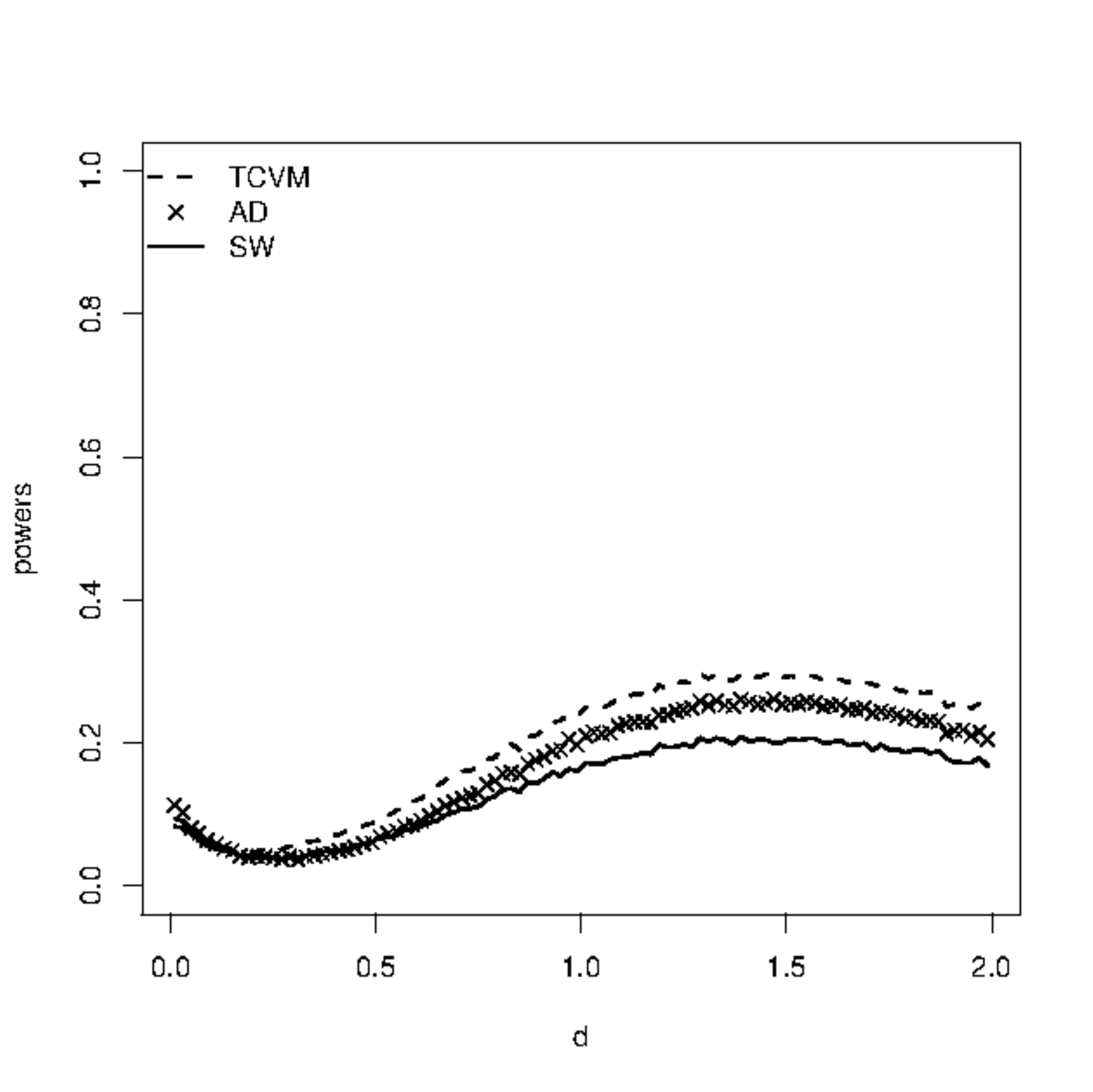} 
 \includegraphics[scale=0.355]{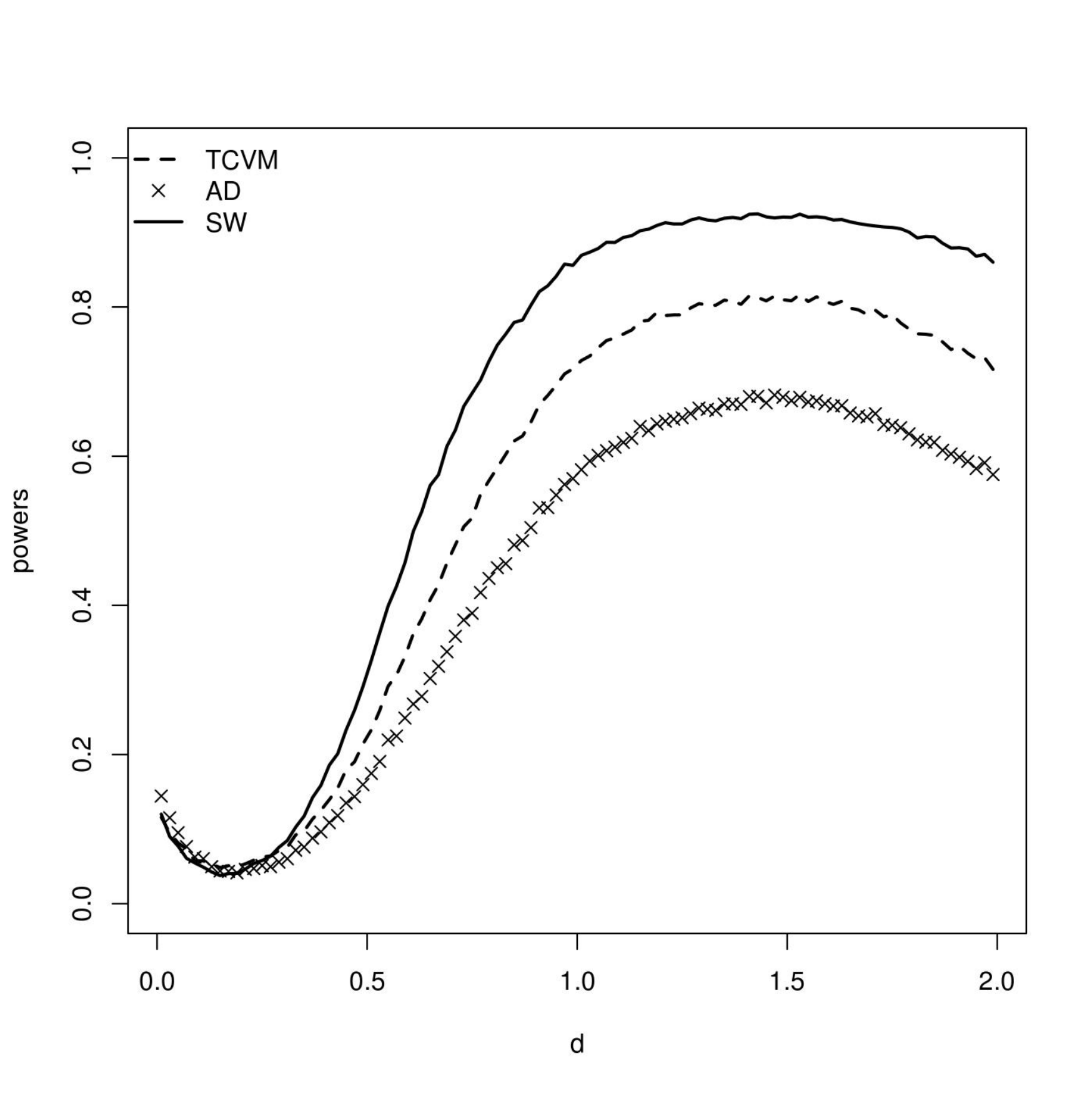}

 \caption{Tukey$(d)$ alternative, $n=20$ and $n=50$.} 
  \label{Tukey}
 
\end{figure}

\noindent In Figure ~\ref{Tukey}, the alternative hypotheses belong to Tukey's family, for sample sizes $n=20$ and $n=50$.
\noindent In Figure ~\ref{locon}, the alternative hypotheses belong to the LoConN$(0.3,p)$ family, for $n=20$ and $n=50$.
We see that for distribution Type 1, the TCVM powers are close to the AD that are the best. For 
distribution Type 2, TCVM is the test with the best performance, and CVM performs badly.
In Types 3, 4, 5 and 6, the performance is the opposite (CVM is the best test while TCVM performs badly).
We can see that in many cases, TCVM is better than the others. Besides, in several of the cases in which TCVM performs 
badly, CVM is competitive with SW, AD and BCMR. Taking into account all  35 alternative hypotheses, we can see that in many cases,
TCVM is better than the others, but under some alternative hypotheses, like symmetrical distributions with high skewness (Type 7), and 
contaminations in the variance of normal distributions (Types 4, 5 and 6), TCVM performs badly. However, in Figure ~\ref{locon}
(contaminations in the mean of normal distributions), TCVM performs very well. It should be noted that in most cases, 
either TCVM or CVM results in the best test.
\begin{figure}[H]
 \centering
   \includegraphics[scale=0.4]{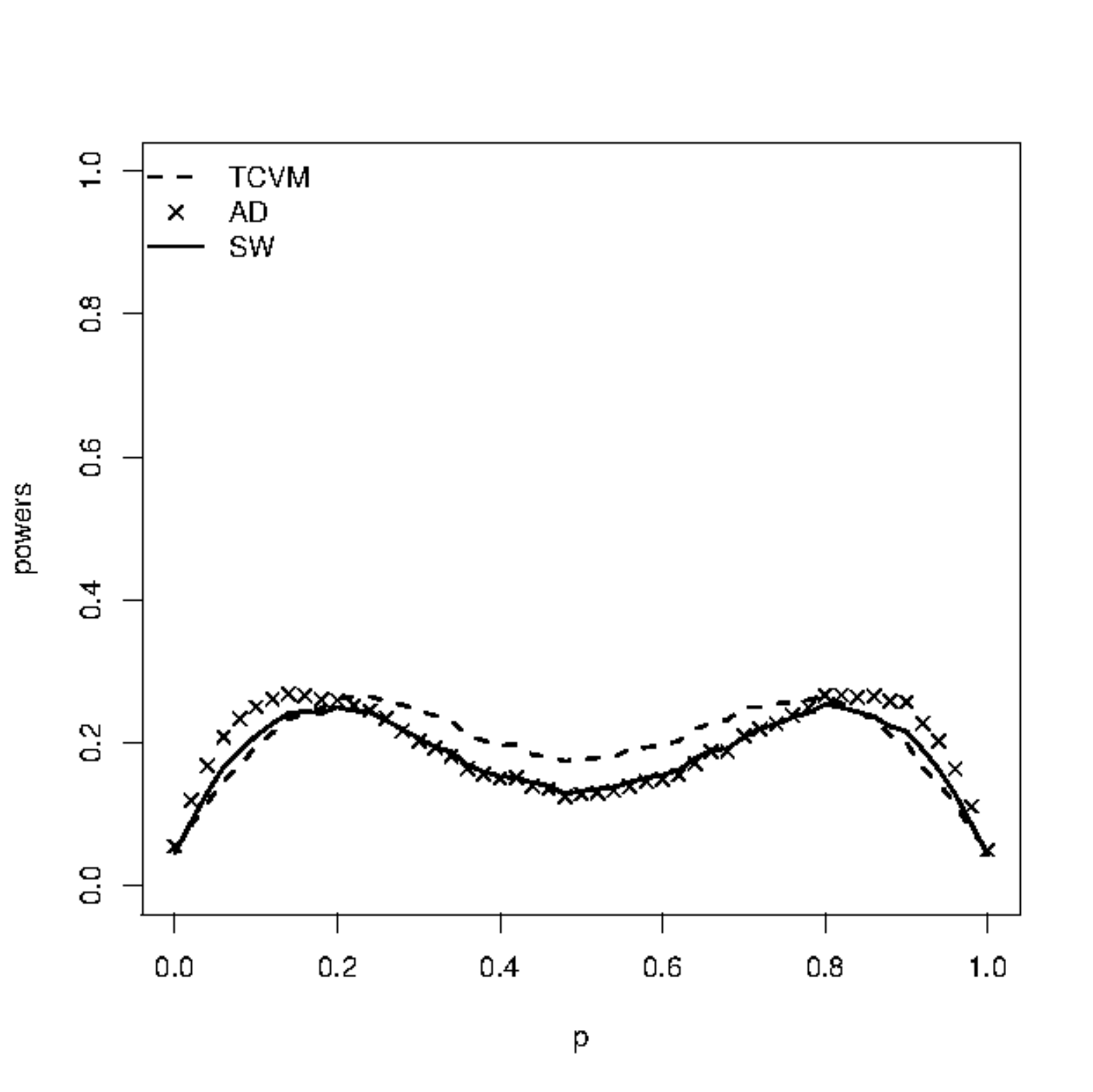} 
 \includegraphics[scale=0.4]{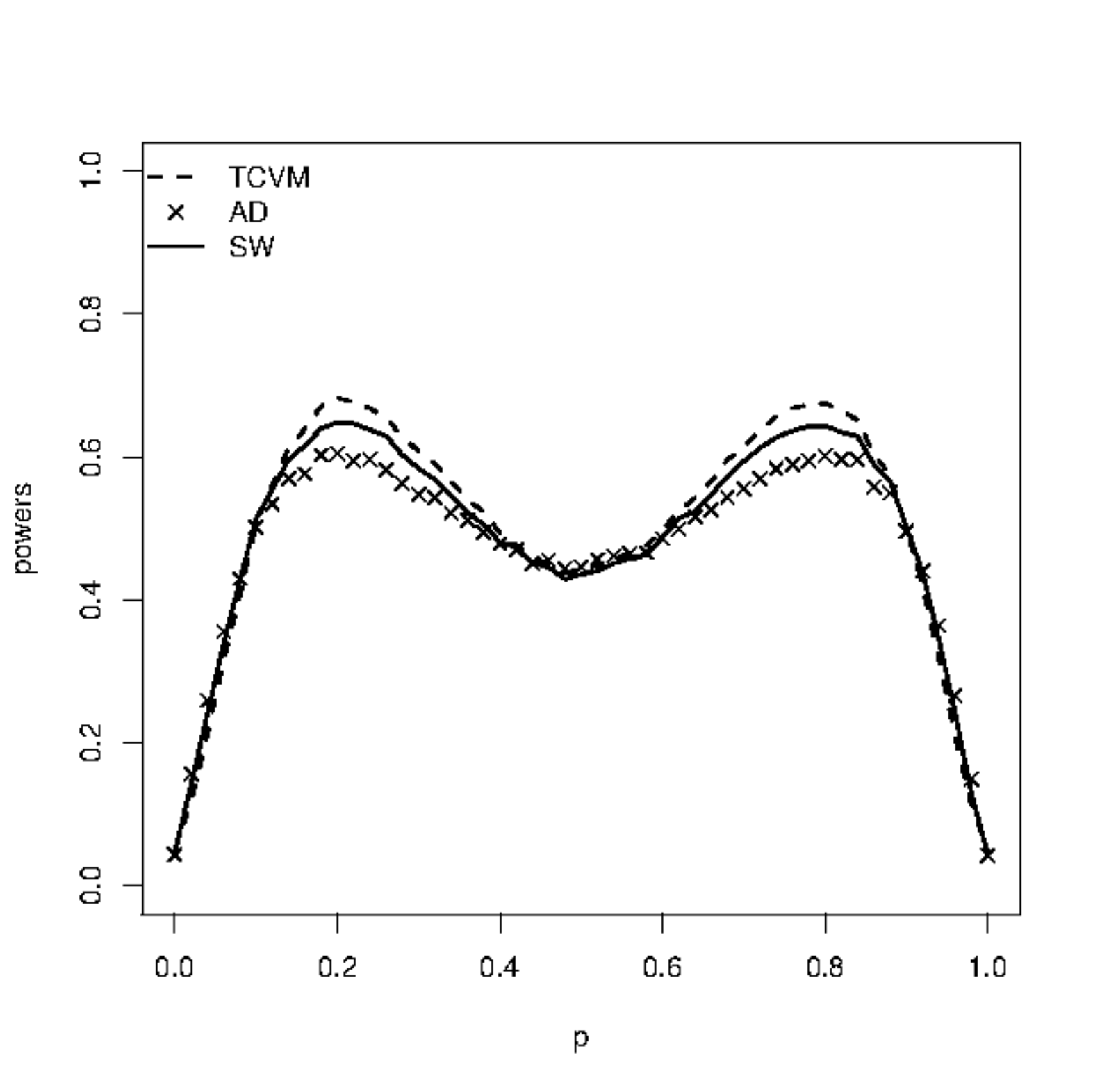} 
 \caption{LoConN$(0.3,p)$ alternative, $n=20$ and $n=50$.} 
  \label{locon}
\end{figure}

\section{Appendix 2. Computing the statistic $T_{n}^{\ast }.$}

\[
T_{n}^{\ast }=\int_{-a_{n}}^{a_{n}}\frac{\widehat{b}_{n}^{2}(x)}{\varphi (x)}%
dx=\int_{-a_{n}}^{a_{n}}\frac{\frac{1}{n}\left( \sum_{i=1}^{n}\mathbf{1}%
_{\left\{ Y_{i}\leq x\right\} }\right) ^{2}-2\phi (x)\sum_{i=1}^{n}\mathbf{1}%
_{\left\{ Y_{i}\leq x\right\} }+n\phi ^{2}(x)}{\varphi (x)}dx= 
\]%
\[
\frac{1}{n}\int_{-a_{n}}^{a_{n}}\frac{\left( \sum_{i=1}^{n}\mathbf{1}%
_{\left\{ Y_{i}\leq x\right\} }\right) ^{2}}{\varphi (x)}dx-2%
\int_{-a_{n}}^{a_{n}}\frac{\phi (x)\sum_{i=1}^{n}\mathbf{1}_{\left\{
Y_{i}\leq x\right\} }}{\varphi (x)}dx+n\int_{-a_{n}}^{a_{n}}\frac{\phi
^{2}(x)}{\varphi (x)}dx. 
\]

Then, given $X_{1},X_{2},...,X_{n}$, the calculation of the statistc $%
T_{n}^{\ast }$ can be summarized in the following steps.

\textbf{Step 1.} Compute $\overline{X}_{n},$ $S_{n}$ and delete all data such that $%
X_{i}\leq \overline{X}_{n}-a_{n}S_{n}$ or $X_{i}\geq \overline{X}%
_{n}+a_{n}S_{n}$ where the values of $a_{n}$ are shown in ninth column of Appendix 2.

\textbf{Step 2.} Define $k:=\sharp \left\{ j:\text{ }X_{j}\leq \overline{X}_{n}-a_{n}S_n\right\} $ and call
$\widetilde{X}_{1},\widetilde{X}_{2},...,\widetilde{X}_{m}$ to
the values that resulted from step 1, and sort them: $\widetilde{X}%
_{1:m}\leq \widetilde{X}_{2:m}\leq ...\leq \widetilde{X}_{m:m}$.

\textbf{Step 3.} Compute $\widetilde{Y}_{0}=-a_{n},$ $\widetilde{Y}_{m+1}=a_{n},$ $%
\widetilde{Y}_{j}=\frac{\widetilde{X}_{j:m}-\overline{X}_{n}}{S_{n}}$for $%
j=1,2,...,m.$

\textbf{Step 4.} Compute $A_{j}=\int_{\widetilde{Y}_{j}}^{\widetilde{Y}_{j+1}}\frac{dx%
}{\varphi (x)}$ and $B_{j}=\int_{\widetilde{Y}_{j}}^{\widetilde{Y}_{j+1}}%
\frac{\phi (x)dx}{\varphi (x)}$ for $j=0,1,2,...,m$ (for example in $R$ you
can use the ``integrate" function to compute $A_{j}$ and $B_{j}$ values).

\textbf{Step 5.} Compute $T_{n}^{\ast }=\frac{1}{n}\sum_{j=0}^{m}\left( j+k\right)
^{2}A_{j}-2\sum_{j=0}^{m}\left( j+k\right) B_{j}+ C_{n},$ where $C_n := n\int_{-a_{n}}^{a_{n}}%
\frac{\phi ^{2}(x)}{\varphi (x)}dx$ and their values for different sample sizes  are shown in the last column of Appendix 2.

\textbf{Step 6.} The critical values for rejection the null hypothesis for different levels of significance are in
Appendix 3. When  $T_n^{*}$ is greater than the critical value, we must reject the hypothesis of normality.

\section{Appendix 3. Tables}
In this section we present a table with critical values of $T_n^*$ for different values
of $n$. The values were obtained by simulation from 50000 replications. From columns 2 to 8, we find the levels of significance of the test. In columns 9 and 10, we find the values of $a_n$ and $C_n$ respectively.

\begin{footnotesize}
	 $%
	 \begin{array}{|c|ccccccc||c|c|} 
 \hline
\textbf{\textit{n}}  & \textbf{0.15}  & \textbf{0.1} & \textbf{0.075}  & \textbf{0.05} & 
\textbf{0.025} & \textbf{0.01}  & \textbf{0.001} & \bf{a_n} & \bf{C_n } \\ \hline
  
10  & 0.7547 & 0.8525 & 0.92590 & 1.0203 & 1.1917 & 
1.4128 & 1.9025 & 1.2816 & 28.5798 \\ 
11  & 0.7909 & 0.8961 & 0.9714 & 1.0754 & 1.2470 & 
1.4733 & 2.0513 &1.3352 &34.3184  \\ 
12  & 0.8179 & 0.9254 & 0.9996 & 1.1050 & 1.2860 & 
1.5043 & 2.0086 &1.3830 &40.4855 \\ 
13  & 0.8489 & 0.9597 & 1.0410 & 1.1456 & 1.3263 & 
1.5679 & 2.1420 & 1.4261 &47.0700  \\ 
14  & 0.8728 & 0.9840 & 1.0632 & 1.1723 & 1.3571 & 
1.5931 & 2.1457 & 1.4652 &54.0622 \\ 
15 & 0.8879 & 1.00573 & 1.08548 & 1.2007 & 1.3975 & 
1.6664 & 2.2531 &1.5341 &61.4534\\ 
16 & 0.9132 & 1.0305 & 1.1154 & 1.2308 & 1.4238 & 
1.6776 & 2.2766 &1.5011 &69.2355\\ 
17  & 0.9300 & 1.0491 & 1.1289 & 1.2381 & 1.4288 & 
1.6714 & 2.2844 &1.5647 &77.4015\\ 
18  & 0.9522 & 1.0769 & 1.1624 & 1.2814 & 1.4854 & 
1.7259& 2.4215 & 1.5932 &85.9446\\ 
19  & 0.9691 & 1.0945 & 1.1823 & 1.3062 & 1.5030 & 
1.7649 & 2.4001 &1.6199 &94.8590\\ 
20  & 0.9857 & 1.1091 & 1.1963 & 1.312 & 1.5213 & 
1.7918 & 2.4935 &1.6448 &104.1389\\ 
21  & 1.0076 & 1.1320 & 1.2174 & 1.3381 & 1.5482 & 
1.8287 & 2.4845 &1.6684 &113.7793\\ 
22  & 1.0142 & 1.1414 & 1.2330 & 1.3607 & 1.5687 & 
1.8671 & 2.5638 & 1.6906 &123.7752\\ 
23  & 1.0287 & 1.1549 & 1.2454 & 1.3729 & 1.5899 & 
1.8607 & 2.5877 &1.7117 &134.1223\\ 
24  & 1.0434 & 1.1731 & 1.2664 & 1.3940 & 1.6186 & 
1.9087 & 2.5657 &1.7317 &  144.8161\\ 
25  & 1.0538 & 1.1876 & 1.2837 & 1.4213 & 1.6322 & 
1.9017 & 2.5965 &1.7507 &155.8529\\ 
26  & 1.0587 & 1.1905 & 1.2856 & 1.4124 & 1.6442 & 
1.9479 & 2.6597 &1.7688 &167.2289\\ 
27  & 1.0745 & 1.2076 & 1.3000 & 1.4348 & 1.6762 & 
1.9522 & 2.7323 &1.7862 &178.9405\\ 
28  & 1.0847 & 1.2190 & 1.3199 & 1.4525 & 1.6863 & 
1.9890 & 2.6675 &1.8027 &190.9843\\ 
29  & 1.0942 & 1.2286 & 1.3213 & 1.4607 & 1.6994 & 
2.0166 & 2.7327 &1.8186 &203.3574\\ 
30  & 1.1102 & 1.2441 & 1.3391 & 1.4708 & 1.6967 & 
1.9752 & 2.6952 &1.8339 &216.0565\\ 
31  & 1.1192 & 1.2600 & 1.3598 & 1.5001 & 1.7279 & 
2.0439 & 2.7540 &1.8486 &229.0789\\ 
32  & 1.1226 & 1.2671 & 1.3668 & 1.5062 & 1.7311 & 
2.0420 & 2.7798 & 1.8627 &242.4218\\ 
33  & 1.1363 & 1.2760 & 1.3730 & 1.5112 & 1.7519 & 
2.0624 & 2.9159 &1.8764 &256.0826\\

 34  & 1.1408 & 1.2788 & 1.3781 & 1.5166 & 1.7584 & 
2.0704 & 2.8223 &1.8895 &270.0589 \\ 
35  & 1.1518 & 1.2886 & 1.3852 & 1.5342 & 1.7691 & 
2.1021 & 2.8338 &1.9022 &284.3482\\ 
36  & 1.1603 & 1.3033 & 1.4066 & 1.5445 & 1.7852 & 
2.1074 & 2.9445 &1.9145 &298.9482 \\ 
37  & 1.1666 & 1.3092 & 1.4067 & 1.5474 & 1.7937 & 
2.1235 & 2.8274 &1.9264 &313.8568 \\ 
38  & 1.1730 & 1.3191 & 1.4193 & 1.5632 & 1.8161 & 
2.1436 & 2.9951 &1.9379 &329.0718\\ 
39  & 1.1908 & 1.3345 & 1.4365 & 1.5804 & 1.8254 & 
2.1241 & 2.9245 &1.9491 &344.5912\\ 
40  & 1.1970 & 1.3367 & 1.4392 & 1.5841 & 1.8324 & 
2.1714 & 2.9444 &1.9600 &360.4130\\ 
41  & 1.2026 & 1.3437 & 1.4469 & 1.5920 & 1.8416 & 
2.1833 & 3.0389 & 1.9705 &376.5354\\ 
42  & 1.2111 & 1.3603 & 1.4613 & 1.6042 & 1.8393 & 
2.1652 & 2.9808 &1.9808  &392.9565\\ 
43  & 1.2015 & 1.3467 & 1.4528 & 1.6031 & 1.8512 & 
2.1814 & 2.9263 &1.9907  &409.6745\\ 
44  & 1.2213 & 1.3708 & 1.4767 & 1.6147 & 1.8585 & 
2.1950 & 3.0911 &2.0004 &426.6878\\ 
45 & 1.2285 & 1.3771 & 1.4843 & 1.6341 & 1.8750 & 
2.1900 & 3.0056 &2.0099 &443.9947\\ 
46  & 1.2350 & 1.3875 & 1.4955 & 1.6477 & 1.9053 & 
2.2467 & 3.0391 & 2.0191 &461.5935 \\ 
47  & 1.2342 & 1.3844 & 1.4903 & 1.6384 & 1.8839 & 
2.2215 & 3.0267 &2.0281 &479.4828\\ 
48  & 1.2469 & 1.4040 & 1.5141 & 1.6680 & 1.9346 & 
2.2634 & 3.0625 &2.0368 &497.6611\\ 
49 & 1.2535 & 1.4063 & 1.5159 & 1.6730 & 1.9293 & 
2.2552 & 3.0640 &2.0454 &516.1269\\ 
50  & 1.2629 & 1.4271 & 1.5304 & 1.6897 & 1.9490 & 
2.2770 & 3.1309 &2.0537 &534.8787\\ 
51  & 1.2645 & 1.4220 & 1.5301 & 1.6947 & 1.9627 & 
2.3063 & 3.2487 &2.0619 &553.9153\\ 
52  & 1.2708 & 1.4243 & 1.5399 & 1.6938 & 1.9521 & 
2.3066 & 3.1432 &2.0699 &573.2352\\ 
53  & 1.2801 & 1.4340 & 1.5445 & 1.7008 & 1.9603 & 
2.3068 & 3.1140 &2.0777 &592.8372\\ \hline
\end{array}%
$\end{footnotesize}

\begin{footnotesize}
$%
 \begin{array}{|c|ccccccc||c|c|} 
 \hline
\textbf{\textit{n}}  & \textbf{0.15}  & \textbf{0.1} & \textbf{0.075}  & \textbf{0.05} & 
\textbf{0.025} & \textbf{0.01}  & \textbf{0.001} & \bf{a_n} & \bf{C_n } \\ \hline
54  & 1.2772 & 1.4313 & 1.5422 & 1.6962 & 1.9627 & 
2.3047 & 3.1660 &2.0854 &612.7200\\ 
55  & 1.2854 & 1.4378 & 1.5531 & 1.7133 & 1.9860 & 
2.3389 & 3.2503 &2.0928 &632.8823\\ 
56  & 1.2902 & 1.4408 & 1.5486 & 1.7107 & 1.9917 & 
2.3399 & 3.2057 &2.1002 &653.3230\\ 
57  & 1.2936 & 1.4500 & 1.5653 & 1.7184 & 1.9894 & 
2.3707 & 3.2309 &2.1073 &674.0410\\ 

 58  & 1.2986 & 1.4590 & 1.5700 & 1.7210 & 1.9798 & 
2.3287 & 3.2403 &2.1144 &695.0349\\ 
59  & 1.3175 & 1.4762 & 1.5872 & 1.7379 & 2.0025 & 
2.3765 & 3.3164 &2.1213 &716.3038\\ 
60  & 1.3056 & 1.4609 & 1.5736 & 1.7346 & 2.0055 & 
2.3570 & 3.2960 &2.1280 &737.8466\\ 
61  & 1.3125 & 1.4747 & 1.5869 & 1.7424 & 2.0206 & 
2.3778 & 3.3615 &2.1347 &759.6622\\ 
62  & 1.3120 & 1.4715 & 1.5852 & 1.7434 & 2.0084 & 
2.3791 & 3.3989 &2.1412 &781.7495\\ 
63  & 1.3282 & 1.4904 & 1.5979 & 1.7568 & 2.0256 & 
2.3538 & 3.2608 &2.1476 &804.1076\\ 
64  & 1.3275 & 1.4875 & 1.6038 & 1.7623 & 2.0344 & 
2.4059 & 3.3059 &2.1539 &826.7354\\ 
65 & 1.3259 & 1.4876 & 1.6038 & 1.7701 & 2.0566 & 
2.4492 & 3.4750 &2.1600 &849.6320\\ 
66  & 1.3280 & 1.4886 & 1.6081 & 1.7691 & 2.0498 & 
2.4100 & 3.2162 &2.1661 &872.7964\\ 
67  & 1.3444 & 1.5083 & 1.6218 & 1.7782 & 2.0520 & 
2.4257 & 3.4441 &2.1721 &896.2277\\ 
68  & 1.3521 & 1.5184 & 1.6297 & 1.7866 & 2.0490 & 
2.4221 & 3.4079 &2.1779 &919.9251\\ 
69  & 1.3492 & 1.5077 & 1.6262 & 1.7902 & 2.0887 & 
2.4654 & 3.4971 &2.1837 &943.8875\\ 
70  & 1.3507 & 1.5173 & 1.6381 & 1.8052 & 2.0929 & 
2.4834 & 3.3568 &2.1893 &968.1142\\ 
71  & 1.3485 & 1.5160 & 1.6333 & 1.7975 & 2.0825 & 
2.4569 & 3.4083 &2.1949 &992.6042\\ 
72  & 1.3643 & 1.5318 & 1.6550 & 1.8260 & 2.1092 & 
2.4882 & 3.3969 &2.2004 &1017.3568\\ 

73  & 1.3668 & 1.5324 & 1.6465 & 1.8064 & 2.0829 & 
2.4538 & 3.3884 &2.2058 &1042.3711\\ 
74  & 1.3643 & 1.5258 & 1.6434 & 1.8053 & 2.0907 & 
2.4632 & 3.4550 &2.2111 &1067.6463\\ 
75  & 1.3698 & 1.5353 & 1.6487 & 1.8207 & 2.1057 & 
2.5039 & 3.3958 &2.2164 &1093.1817\\ 
76  & 1.3619 & 1.5231 & 1.6324 & 1.8007 & 2.0775 & 
2.4609 & 3.4476 &2.2215 &1118.9764\\ 
77  & 1.3931 & 1.5646 & 1.6840 & 1.8451 & 2.1306 & 
2.5129 & 3.5805 &2.2266 &1145.0297\\ 
78  & 1.3896 & 1.5560 & 1.6807 & 1.8527 & 2.1487 & 
2.5420 & 3.4796 &2.2316 &  1171.3408\\ 
79  & 1.3930 & 1.5669 & 1.6900 & 1.8486 & 2.1391 & 
2.5279 & 3.4635 &2.2365 &1197.9091\\ 
80  & 1.3873 & 1.5528 & 1.6711 & 1.8431 & 2.1262 & 
2.5284 & 3.4714 &2.2414 &1224.7337\\ 
81  & 1.3923 & 1.5573 & 1.6734 & 1.8414 & 2.1218 & 
2.4825 & 3.4870 &2.2462 &1251.8140\\

 82  & 1.3857 & 1.5559 & 1.6767 & 1.8532 & 2.1422 & 
2.4974 & 3.4783 &2.2509 &1279.1492\\ 
83  & 1.3860 & 1.5579 & 1.6799 & 1.8568 & 2.1377 & 
2.5472 & 3.4902 &2.2556 &1306.7388\\ 
84  & 1.4178 & 1.5832 & 1.7032 & 1.8737 & 2.1635 & 
2.5825 & 3.5271 &2.2602 &1334.5820\\ 
85  & 1.4066 & 1.5793 & 1.7007 & 1.8776 & 2.1598 & 
2.5435 & 3.4956 &2.2647 & 1362.6781\\ 
86  & 1.4097 & 1.5776 & 1.6929 & 1.8562 & 2.1554 & 
2.5406 & 3.5896 &2.2692 &1391.0266\\ 
87  & 1.4173 & 1.5900 & 1.7129 & 1.8834 & 2.1823 & 
2.6082 & 3.6324 &2.2736 &1419.6267\\ 
88  & 1.4149 & 1.5912 & 1.7170 & 1.8921 & 2.1844 & 
2.6253 & 3.6058 &2.2780 &1448.4778\\ 
89 & 1.4159 & 1.5853 & 1.7026 & 1.8711 & 2.1765 & 
2.5753 & 3.5992 &2.2823 &1477.5794\\ 
90  & 1.4191 & 1.5880 & 1.7085 & 1.8794 & 2.1779 & 
2.5629 & 3.5503 &2.2865 &1506.9307\\ 
91  & 1.4419 & 1.6148 & 1.7353 & 1.9100 & 2.2180 & 
2.6173 & 3.6562 &2.2907 &1536.5313\\ 
92 & 1.4381 & 1.6061 & 1.7302 & 1.9133 & 2.2286 & 
2.6358 & 3.7350 &2.2949 &1566.3805\\ 
93  & 1.4378 & 1.6117 & 1.7277 & 1.8948 & 2.2087 & 
2.5941 & 3.6177 &2.2990 &1596.4777\\ 
94  & 1.4356 & 1.6133 & 1.7393 & 1.9103 & 2.2107 & 
2.5887 & 3.6782 &2.3030 &1626.8223\\ 
95  & 1.4363 & 1.6112 & 1.7321 & 1.9039 & 2.2049 & 
2.6048 & 3.6679 &2.3070 &1657.4138\\ 
96  & 1.4375 & 1.6138 & 1.7321 & 1.9096 & 2.2107 & 
2.6169 & 3.6031 &  2.3110 &1688.2517\\ 
97 & 1.4355 & 1.6057 & 1.7274 & 1.9046 & 2.2146 & 
2.6168 & 3.6890 &2.3149 &1719.3353\\ 
98  & 1.4445 & 1.6137 & 1.7390 & 1.9171 & 2.2214 & 
2.6184 & 3.6037 &2.3188 &1750.6642\\ 
99  & 1.4338 & 1.6128 & 1.7343 & 1.9024 & 2.2065 & 
2.6307 & 3.6025 &2.3226 &1782.2377\\ 
100  & 1.4568 & 1.6358 & 1.7637 & 1.9396 & 2.2606 & 
2.6495 & 3.6210 &2.3263 &1814.0555\\ 
101  & 1.4648 & 1.6426 & 1.7661 & 1.9444 & 2.2449 & 
2.6978 & 3.7069 &2.3301 &1846.1168\\ 
102 & 1.4579 & 1.6328 & 1.7594 & 1.9336 & 2.2381 & 
2.6473 & 3.7758 &2.3338 &1878.4214\\ 
103  & 1.4656 & 1.6398 & 1.7669 & 1.9521 & 2.2556 & 
2.6575 & 3.6712 &2.3374 &1910.9685\\ 
104  & 1.4652 & 1.6465 & 1.7742 & 1.9433 & 2.2514 & 
2.6530 & 3.7589 &2.3410 &1943.7578\\ 
105 & 1.4668 & 1.6380 & 1.7589 & 1.9397 & 2.2514 & 
2.6561 & 3.6646 &2.3446 &1976.7887\\ \hline
\end{array}$ \end{footnotesize}

\begin{footnotesize}
 $
 \begin{array}{|c|ccccccc||c|c|} 
 \hline
\textbf{\textit{n}}  & \textbf{0.15}  & \textbf{0.1} & \textbf{0.075}  & \textbf{0.05} & 
\textbf{0.025} & \textbf{0.01}  & \textbf{0.001} & \bf{a_n} & \bf{C_n } \\ \hline
 106  & 1.4653 & 1.6402 & 1.7629 & 1.9431 & 2.2407 & 
2.6590 & 3.7376 &2.3481 &2010.0608\\ 
107 & 1.4624 & 1.6360 & 1.7586 & 1.9389 & 2.2620 & 
2.6619 & 3.7710 &2.3516 &2043.5736\\ 
108  & 1.4611 & 1.6334 & 1.7577 & 1.9301 & 2.2403 & 
2.6498 & 3.8194 & 2.3551 &2077.3266\\ 
109  & 1.4634 & 1.6421 & 1.7680 & 1.9488 & 2.2502 & 
2.6373 & 3.7118 &2.3585 &2111.3193\\ 
110  & 1.4654 & 1.6381 & 1.7622 & 1.9379 & 2.2344 & 
2.6400 & 3.6154 & 2.3619 &2145.5513\\ 
111 & 1.4671 & 1.6525 & 1.7806 & 1.9570 & 2.2594 & 
2.6291 & 3.7433 &2.3652 &2180.0221\\ 
112  & 1.4914 & 1.6740 & 1.7934 & 1.9840 & 2.2849 & 
2.7172 & 3.7062 &2.3686 &2214.7313\\

113 & 1.4973 & 1.6782 & 1.8027 & 1.9823 & 2.3010 & 
2.7150 & 3.7486 &2.3719 &2249.6784\\ 
114  & 1.4936 & 1.6668 & 1.7955 & 1.9757 & 2.2761 & 
2.7000 & 3.7684 &2.3751 &2284.8629\\ 
115 & 1.4859 & 1.6651 & 1.7934 & 1.9717 & 2.2655 & 
2.6743 & 3.7925 &2.3783 &2320.2846\\ 
116  & 1.4931 & 1.6768 & 1.8051 & 1.9844 & 2.2938 & 
2.7092 & 3.7641 &2.3815 &2355.9428\\ 
117  & 1.4982 & 1.6737 & 1.8047 & 1.9934 & 2.2950 & 
2.7069 & 3.8541 &2.3847 &2391.8373\\ 
118  & 1.4916 & 1.6766 & 1.8010 & 1.9803 & 2.2962 & 
2.7081 & 3.7584 &2.3878 &2427.9675\\ 
119  & 1.4936 & 1.6765 & 1.8043 & 1.9907 & 2.2986 & 
2.7223 & 3.8190 &2.3909 &2464.3331\\ 
120  & 1.4914 & 1.6709 & 1.7969 & 1.9796 & 2.2913 & 
2.6762 & 3.6763 &2.3940 &2500.9337\\ 
121  & 1.4992 & 1.6804 & 1.8054 & 1.9775 & 2.2979 & 
2.7028 & 3.8052 &2.3970  &2537.7688\\ 
122  & 1.5008 & 1.6747 & 1.7973 & 1.9802 & 2.2697 & 
2.6783 & 3.7898 &2.4000 &2574.8381\\ 
123  & 1.4952 & 1.6757 & 1.8048 & 1.9860 & 2.2866 & 
2.7052 & 3.7794 &2.4030 &2612.1411\\ 
124  & 1.4959 & 1.6759 & 1.8052 & 1.9868 & 2.2933 & 
2.7154 & 3.6734 &2.4060 &2649.6775\\ 
125  & 1.5318 & 1.7153 & 1.8474 & 2.0254 & 2.3491 & 
2.7653 & 3.9034 &2.4089 &2687.4469\\ 
126  & 1.5294 & 1.7087 & 1.8409 & 2.0284 & 2.3443 & 
2.7871 & 3.8894 &2.4118 &2725.4489\\ 
127  & 1.5381 & 1.7214 & 1.8481 & 2.0238 & 2.3321 & 
2.7497 & 3.8923 &2.4147 &2763.6831\\ 
128  & 1.5309 & 1.7123 & 1.8400 & 2.0182 & 2.3288 & 
2.7619 & 3.8255 & 2.4176 &2802.1492\\ 
129  & 1.5409 & 1.7260 & 1.8572 & 2.0429 & 2.3626 & 
2.7835 & 3.7827 &2.4204 &2840.8467\\ 

 130  & 1.5261 & 1.7104 & 1.8356 & 2.0192 & 2.3348 & 
2.7624 & 3.9036 &2.4232 &2879.7754\\ 
131  & 1.5235 & 1.7010 & 1.8275 & 2.0085 & 2.3269 & 
2.7192 & 3.8243 &2.4260  &2918.9348\\ 
132  & 1.5336 & 1.7112 & 1.8436 & 2.0235 & 2.3412 & 
2.7569 & 3.7583 &2.4287 &2958.3246\\ 
133  & 1.5288 & 1.7149 & 1.8497 & 2.0347 & 2.3422 & 
2.7812 & 3.7633 &2.4315  &2997.9444\\ 
134  & 1.5335 & 1.7190 & 1.8476 & 2.0265 & 2.3437 & 
2.8052 & 3.8702 &2.4342 &3037.7939\\ 
135  & 1.5384& 1.7142 & 1.8415 & 2.0223 & 2.3308 & 
2.7512 & 3.8024 & 2.4369 &3077.8728\\ 
136  & 1.5266 & 1.7098 & 1.8395 & 2.0177 & 2.3359 & 
2.7451 & 3.8569 &2.4395 &3118.1807\\ 
137 & 1.5298 & 1.7110 & 1.8388 & 2.0191 & 2.3386 & 
2.7836 & 3.8683 &2.4422 &3158.7172\\ 
138  & 1.5304 & 1.7133 & 1.8412 & 2.0182 & 2.3396 & 
2.7247 & 3.7756 &2.4448 &3199.4820\\ 
139  & 1.5280 & 1.7143 & 1.8458 & 2.0307 & 2.3437 & 
2.8075 & 3.8780 &2.4474 &3240.4748\\ 
140  & 1.5367 & 1.7242 & 1.8548 & 2.0318 & 2.3468 & 
2.7696 & 3.8377 &2.4500  &3281.6953\\ 
141  & 1.5285 & 1.7122 & 1.8489 & 2.0288 & 2.3198 & 
2.7296 & 3.9228 &2.4526 &3323.1431\\ 
142  & 1.5277 & 1.7122 & 1.8403 & 2.0260 & 2.3461 & 
2.7479 & 3.8630 &2.4551 &3364.8179\\ 
143  & 1.5703 & 1.7560 & 1.8874 & 2.0776 & 2.4052 & 
2.8404 & 3.9334 &2.4576 &3406.7194\\ 
144  & 1.5686 & 1.7593 & 1.8872 & 2.0716 & 2.3878 & 
2.8007 & 4.1252 & 2.4601 &3448.8472\\ 
145 & 1.5750 & 1.7631 & 1.8959 & 2.0823 & 2.3833 & 
2.8230 & 4.0015 &2.4626 &3491.2011\\ 
146  & 1.5679 & 1.7588 & 1.8976 & 2.0884 & 2.4081 & 
2.8252 & 3.8971 &2.4651 &3533.7808\\ 
147  & 1.5641 & 1.7590 & 1.8962 & 2.0794 & 2.4078 & 
2.8128 & 3.9899 &2.4675 &3576.5858\\ 
148  & 1.5628 & 1.7477 & 1.8827 & 2.0588 & 2.3786 & 
2.8044 & 4.0139 & 2.4699 &3619.6160\\ 
149  & 1.5781 & 1.7605 & 1.8931 & 2.0821 & 2.4324 & 
2.8661 & 4.0380 &2.4723 &3662.8710\\ 
150  & 1.5693 & 1.7627 & 1.8942 & 2.0844 & 2.4212 & 
2.8560 & 3.9064 &2.4747 &3706.3505\\ 
151  & 1.5664 & 1.7507 & 1.8861 & 2.0802 & 2.4117 & 
2.8842 & 4.0000 &2.4771 &3750.0543\\ 
152  & 1.5685 & 1.7542 & 1.8842 & 2.0725 & 2.4020 & 
2.8504 & 3.9861 &2.4795 &3793.9820\\ 
153  & 1.5670 & 1.7486 & 1.8777 & 2.0670 & 2.3894 & 
2.8398 & 3.9350 &2.4818 &3838.1333\\ 
 154  & 1.5736 & 1.7672 & 1.8997 & 2.0929 & 2.4096 & 
2.8565 & 3.9442 &2.4841 &3882.5079\\ 
155  & 1.5720 & 1.7637 & 1.8945 & 2.0866 & 2.4006 & 
2.8125 & 3.8631 &2.4864 & 3927.1056\\ 
156  & 1.5713 & 1.7670 & 1.8988 & 2.0980 & 2.4320 & 
2.8729 & 3.9977 &2.4887 &3971.9261\\ 
157  & 1.5689 & 1.7527 & 1.8830 & 2.0711 & 2.3973 & 
2.8370 & 3.8880 &2.4910 &4016.9691\\ \hline
\end{array}%
$\end{footnotesize}

\begin{footnotesize}
$%
 \begin{array}{|c|ccccccc||c|c|} 
 \hline
\textbf{\textit{n}}  & \textbf{0.15}  & \textbf{0.1} & \textbf{0.075}  & \textbf{0.05} & 
\textbf{0.025} & \textbf{0.01}  & \textbf{0.001} & \bf{a_n} & \bf{C_n } \\ \hline
158  & 1.5723 & 1.7604 & 1.8990 & 2.0775 & 2.4074 & 
2.8411 & 3.9726 &2.4932 &4062.2343\\ 
159  & 1.5728 & 1.7598 & 1.8960 & 2.0764 & 2.3939 & 
2.8315 & 4.0027 &2.4955 &4107.7214\\ 
160  & 1.5662 & 1.7526 & 1.8812 & 2.0655 & 2.4023 & 
2.8322 & 4.0344 &2.4977 &4153.4301\\ 
161 & 1.5673 & 1.7568 & 1.8910 & 2.0793 & 2.4079 & 
2.8703 & 3.9389 &2.4999 &4199.3603\\ 
162  & 1.5670 & 1.7538 & 1.8878 & 2.0758 & 2.4150 & 
2.8757 & 4.0885 &2.5021 &4245.5115\\ 
163  & 1.5731 & 1.7620 & 1.8952 & 2.0779 & 2.3952 & 
2.8549 & 3.9610 &2.5043 &4291.8836\\ 
164  & 1.5638 & 1.7507 & 1.8851 & 2.0779 & 2.4150 & 
2.8065 & 3.8816 &2.5064 &4338.4763\\ 
165  & 1.5700 & 1.7558 & 1.8872 & 2.0719 & 2.3851 & 
2.8230 & 3.9072 &2.5086 &4385.2894\\ 
166  & 1.5566 & 1.7460 & 1.8801 & 2.0689 & 2.3815 & 
2.8411 & 3.9225 &2.5107 &4432.3224\\ 
167 & 1.6161 & 1.8091 & 1.9469 & 2.1541 & 2.4844 & 
2.9439 & 4.0744 &2.5128 &4479.5753\\ 
168  & 1.6217 & 1.8117 & 1.9473 & 2.1437 & 2.4647 & 
2.9140 & 4.0022 &2.5150 &4527.0477\\ 
169  & 1.6170 & 1.8016 & 1.9385 & 2.1341 & 2.4764 & 
2.9199 & 4.0265 &2.5170 &4574.7394\\ 
170  & 1.6114 & 1.8041 & 1.9389 & 2.1331 & 2.4608 & 
2.9014 & 4.1730 & 2.5191 &4622.6501\\ 
171  & 1.6161 & 1.8114 & 1.9454 & 2.1368 & 2.4676 & 
2.9217 & 4.0926 &2.5212 &4670.7796\\ 
172  & 1.6106 & 1.7986 & 1.9399 & 2.1326 & 2.4624 & 
2.8999 & 4.1438 &2.5232 &4719.1276\\ 
173  & 1.6192 & 1.8214 & 1.9553 & 2.1474 & 2.4929 & 
2.9706 & 4.0761 &2.5253 &4767.6939\\ 
174  & 1.6157 & 1.8069 & 1.9431 & 2.1289 & 2.4482 & 
2.8809 & 4.0472 &2.5273  &4816.4782\\ 
175  & 1.6082 & 1.8052 & 1.9447 & 2.1369 & 2.4672 & 
2.9281 & 3.9707 &2.5293 &4865.4804\\ 
176  & 1.6173 & 1.8097 & 1.9474 & 2.1422 & 2.4750 & 
2.9213 & 4.0997 &  2.5313 &4914.7001\\ 
177  & 1.6078 & 1.7979 & 1.9427 & 2.1378 & 2.4589 & 
2.9142 & 4.0648 &2.5333 &4964.1371\\ 

 178  & 1.6102 & 1.8069 & 1.9417 & 2.1439 & 2.4849 & 
2.9138 & 3.9984 &2.5353 &5013.7912\\ 
179  & 1.6150 & 1.8043 & 1.9347 & 2.1237 & 2.4533 & 
2.9027 & 4.0803 &2.5372 &5063.6621\\ 
180  & 1.6133 & 1.8025 & 1.9414 & 2.1374 & 2.4735 & 
2.9096 & 3.9865 &2.5392 &5113.7496\\ 
181  & 1.6080 & 1.7933 & 1.9204 & 2.1121 & 2.4501 & 
2.8787 & 4.0807 &2.5411 &5164.0535\\ 
182  & 1.6149 & 1.8081 & 1.9445 & 2.1443 & 2.4809 & 
2.9197 & 4.0438 &2.5430 &5214.5735\\ 
183  & 1.6136 & 1.8006 & 1.9423 & 2.1373 & 2.4674 & 
2.8889 & 4.2658 &2.5450 &5265.3095\\ 
184  & 1.6041 & 1.7966 & 1.9325 & 2.1395 & 2.4730 & 
2.9267 & 4.1624 &2.5469 &5316.2611\\ 
185 & 1.6135 & 1.8104 & 1.9452 & 2.1380 & 2.4619 & 
2.8934 & 4.3550 &2.5488 &5367.4282\\ 
186 & 1.6122 & 1.8053 & 1.9433 & 2.1357 & 2.4744 & 
2.9524 & 4.0716 &2.5506 &3 5418.8106\\ 
187  & 1.6162 & 1.8079 & 1.9412 & 2.1267 & 2.4492 & 
2.9129 & 4.0421 &2.5525  &5470.4079\\ 
188  & 1.6070 & 1.7985 & 1.9401 & 2.1441 & 2.4866 & 
2.9596 & 4.0490 &2.5544 &5522.2200\\ 
189  & 1.6244 & 1.8170 & 1.9510 & 2.1456 & 2.4839 & 
2.9188 & 4.0255 &2.5562 &5574.2468\\ 
190  & 1.6234 & 1.8161 & 1.9584 & 2.1561 & 2.4949 & 
2.9272 & 4.0296 &2.5580 &5626.4878\\ 
191  & 1.6099 & 1.8018 & 1.9372 & 2.1328 & 2.4302 & 
2.8630 & 4.0091 &2.5599 &5678.9431\\ 
192  & 1.6190 & 1.8098 & 1.9478 & 2.1408 & 2.4843 & 
2.9319 & 4.1542 & 2.5617 &5731.6122\\
193  & 1.6219 & 1.8135 & 1.9520 & 2.1423 & 2.4679 & 
2.9339 & 4.1218 &2.5635 &5784.4951\\ 
194  & 1.6181 & 1.8069 & 1.9465 & 2.1347 & 2.4615 & 
2.8779 & 4.0501 &2.5653 &5837.5914\\ 
195  & 1.6056 & 1.8000 & 1.9323 & 2.1229 & 2.4512 & 
2.8756 & 4.1073 &2.5671 &5890.9011\\ 
196  & 1.6090 & 1.8070 & 1.9426 & 2.1445 & 2.4596 & 
2.9005 & 4.1082 &2.5688 &5944.4238\\ 
197  & 1.6227 & 1.8158 & 1.9551 & 2.1444 & 2.4778 & 
2.9440 & 4.1214 &2.5706 &5998.1594\\ 
198  & 1.6224 & 1.8160 & 1.9520 & 2.1429 & 2.4777 & 
2.9228 & 4.1046 &2.5724 &6052.1077\\ 
199  & 1.6171 & 1.8062 & 1.9428 & 2.1411 & 2.4806 & 
2.9012 & 4.0380 &2.5741 &6106.2685\\ 
200  & 1.6534 & 1.8537 & 1.9956 & 2.1848 & 2.5180 & 
2.9667 & 4.2543 &2.5758 &6160.6415\\ 
\hline 
\end{array}%
$
\end{footnotesize}

\begin{footnotesize}
$%
\begin{array}{|c|ccccccc||c|c|} 
\hline
\textbf{\textit{n}}  & \textbf{0.15}  & \textbf{0.1} & \textbf{0.075}  & \textbf{0.05} & 
\textbf{0.025} & \textbf{0.01}  & \textbf{0.001} & \bf{a_n} & \bf{C_n } \\ \hline
250 & 1.6854 & 1.8946 & 2.0403 & 2.2348 & 2.5882 & 3.0200 & 4.4252 &2.6521 & 9145.8 \\
500 & 1.8803 & 2.1020 & 2.2570 & 2.4775 & 2.8403 & 3.3546 & 4.6692 &2.8782 &31419.2  \\
750 & 1.9601 & 2.1819 & 2.3448 & 2.5749 & 2.9473 & 3.4903 & 4.8916 &3.0038 &65019.6 \\
1000 & 2.0218 & 2.2488 & 2.4066 & 2.6323 & 3.0151 & 3.5407 & 4.9433 &3.0902 &109204 \\
10000 & 2.7769 & 3.0512 & 3.2332 & 3.4776 & 3.8861 & 4.4679 & 5.6259 &3.7190 &7439183 \\
\hline
\end{array}	$
\end{footnotesize}

\section{Acknowledgements}
The author wishes to thank Enrique Cabaña for your help in the preparation of this work.

\section{References}

\begin{enumerate}
\item Anderson, T. W. and Darling, D. A. (1954). A test of goodness of fit. 
\textit{J. Amer. Statist. Assoc.} 49, 765--769. 

\item del Barrio, E., Cuesta Albertos, J. A., Matr\'an, C.
and Rodr\'iguez Rodr\'iguez, J. (1999). Tests of fit based on the
L2-Wasserstein distance. \textit{Ann. Statist}. 27, 1230--1239. 

\item Cram\'er, H. (1928). On the composition of elementary
errors. Second paper: Statistical applications. \textit{Skand. Aktuartidskr.}%
 11, 141--180. 

\item Csörgo%
${{}^3}$%
, S. and Faraway, J. (1996). The Exact and Asymptotic Distributions
of Cram\'er--von Mises Statistics \textit{J. R. Statist. Soc}. B 58(1), 
221--234. 

\item Durbin, J. (1973). Weak convegence of the sample distribution when
parameters are estimated. \textit{Ann. Statist.} 1, 219. 

\item Fisher, R. A., (1930). The moments of the distribution for normal
samples of measures of departure from normality. \textit{Proceedings of
Royal Society}, A, 130, 16.

\item Gan, F. F., Koelher, K. J. (1990). Goodness of fit tests based on P--P
probability plots. \textit{Technometrics} 32, 289--303.

\item Kalemkerian, J., (2017). An integral formula for the distribution of
self-normalized Gaussian random samples. \textit{Communications in
Statistics. Theory and Methods} 46(10), 4671--4685.

\item Lockhart, R. A. and Stephens, M. A., (1998). \textit{The probability
plot: Test of fit based on the correlation coefficient.} In \textit{Order
statistics: Applications. Handbook of Statistics} 17, 453--473. North
Holland, Amsterdam.

\item Pearson, K. (1895). Contributions to the mathematical theory of
evolution. \textit{Philosophical Transactions of the Royal Society}
91, 343.

\item Pearson, E. S., (1930). A further development of tests for normality. 
\textit{Biometrika} 22, 239--249. 

\item Pettitt, A. N. (1976). Cram\'er--von Mises statistics for
testing normality with censored samples. \textit{Biometrika}, 63(3),
475--481.

\item Pettitt, A. N. and Stephens, M. A. (1976). Modified Cram\'er--von Mises statistics with censored samples. \textit{Biometrika}
63(2), 291--298. 

\item Shapiro, S. S. and Wilk, M. B. (1965). An analysis of variance test
for normality (complete samples). \textit{Biometrika} 64, 415--418. 

\item Shorack, G. and Wellner, J. (1982). Limit theorems and inequalities for the uniform empirical processes indexed by intervals.
\textit{The Annals of Probability} 10(3), 639--652.

\item Skorokhod, A. (1956). Limit theorem for stochastic processes. \textit{Theor. Probability Appl.} 1, 261--290.

\item Smirnov, N. V. (1936). Sur la distribution de $w^{2}$ (Crit\'erium de M. R. von Mises). \textit{Comptes Rendus de l'Acad\'emie 
des Sciences} 202, 449--452

\item Smirnov, N. V. (1937). Sur la distribution de $w^{2}$ (Crit\'erium de M. R. von Mises). \textit{Matematicheskij Sbornik}
(in Russian with French summary) 2,
973--993.

\item Stephens, M. A. (1974). EDF Statistics for Goodness of Fit and Some
Comparisions. \textit{JASA} 79, 730--737.

\item Stephens, M. A. (1986). Tests based on EDF statistics. In R. B. D'Agostino and M. A. Stephens, eds.. \textit{Goodness of Fit Techniques}, North-Holland, Amsterdam. 

\item de Wet and Venter, J. (1973). Asymptotic distributions for quadratic
forms with applications to test of fit. \textit{Ann. Statist.} 2, 380--387.

\item von Mises, R. (1931). \textit{Wahrscheinlichkeitsrechnung}, Vienna.

\item Williams, P. (1935). Note on the sampling distribution of $\sqrt{%
\beta_{2}}$, where the population is normal. \textit{Biometrika} 27,
269--271.
\end{enumerate}

\end{document}